
\documentclass[12pt, twoside]{article}
\usepackage{amsmath,amsthm,amssymb}
\usepackage{times}
\usepackage{enumerate}

\usepackage[nohug]{diagrams}

\pagestyle{myheadings}
\def\titlerunning#1{\gdef\titrun{#1}}
\makeatletter
\def\author#1{\gdef\autrun{\def\and{\unskip, }#1}\gdef\@author{#1}}
\def\address#1{{\def\and{\\\hspace*{18pt}}\renewcommand{\thefootnote}{}%
\footnote {#1}}%
\markboth{\autrun}{\titrun}}
\makeatother
\def\email#1{e-mail: #1}
\def\subjclass#1{{\renewcommand{\thefootnote}{}%
\footnote{\emph{Mathematics Subject Classification (2010):} #1}}}
\def\keywords#1{\par\medskip
\noindent\textbf{Keywords.} #1}


\newtheorem{thm}{Theorem}[section]
\newtheorem{cor}[thm]{Corollary}
\newtheorem{lem}[thm]{Lemma}

\newtheorem{prop}[thm]{Proposition}



\theoremstyle{definition}
\newtheorem{defin}[thm]{Definition}
\newtheorem{rem}[thm]{Remark}



\numberwithin{equation}{section}

\frenchspacing

\textwidth=15cm
\textheight=23cm
\parindent=16pt
\oddsidemargin=-0.5cm
\evensidemargin=-0.5cm
\topmargin=-0.5cm



\newcommand{\set}[1]{\,\left\{#1\right\}}
\newcommand{\setd}[2]{\,\left\{#1\ \colon\ #2\right\}}

\newcommand{\sign}{\operatorname{sign}}

\newcommand{\RR}{\mathbb{R}}
\newcommand{\ZZ}{\mathbb{Z}}

\newcommand{\NN}{\mathbb{N}}


\begin{document}




\titlerunning{ }

\title{Poincar\'{e} inequalities and rigidity for actions on Banach spaces}

\author{Piotr W. Nowak}

\date{}

\maketitle

\address{Institute of Mathematics of the Polish Academy of Sciences, {\'{S}niadeckich} 8, 00-956 Warszawa, Poland
\and
Institute of Mathematics, University of Warsaw, Banacha 2, 02-097 Warszawa, Poland. \email{pnowak@mimuw.edu.p}}

\subjclass{Primary 22D12; Secondary 46B03}


\begin{abstract}
The aim of this paper is to extend the framework of the spectral method for proving property (T) to 
the class of reflexive Banach spaces and present
a condition implying that every affine isometric action of a given group
$G$ on a reflexive Banach space $X$ has a fixed point. This last property is a strong version of 
Kazhdan's property (T) and is equivalent to the fact that $H^1(G,\pi)=0$ for every
isometric representation $\pi$ of $G$ on $X$. The condition is expressed in
terms of $p$-Poincar\'{e} constants and we provide examples 
of groups, which satisfy such conditions and for which $H^1(G,\pi)$ vanishes for every
 isometric representation $\pi$ on an $L_p$ space for some $p>2$. 
 Our methods allow to estimate such a $p$ explicitly and yield several interesting applications.
In particular, we obtain quantitative estimates for 
vanishing of 1-cohomology with 
coefficients in uniformly bounded representations on a Hilbert space. We also 
give lower bounds on the conformal dimension of the boundary of
a hyperbolic group in the Gromov density model. 

\keywords{Poincar\'{e} inequality; Kazhdan's property (T); affine isometric action; 1-cohomology.}
\end{abstract}

\section{Introduction}

Kazhdan's property (T) is a powerful rigidity property of groups with numerous applications
and several characterizations. In this article we focus on the following description of property (T):
\emph{ a group $G$ has property (T) if and only if every affine isometric action of $G$ on the Hilbert
space has a fixed point}. This characterization can be  rephrased  as the cohomological
condition $H^1(G,\pi)=0$, for every unitary representation $\pi$ of $G$.
A generalization of property (T) to other Banach spaces
is then straightforward: we
are interested in  conditions implying that every affine isometric action of a given group on a given
Banach space has a fixed point. 
Such rigidity properties for actions on Banach spaces, as well as other generalizations of property (T),
and their applications, were studied earlier in
\cite{bader-et-al,fisher-margulis,chatterji-drutu-haglund,lafforgue}.

One very successful method of proving property (T)
is through spectral conditions on links of vertices of complexes acted upon by a group. 
Variations of such conditions were studied in 
\cite{ballman-swiatkowski,dymara-januszkiewicz,ershov-et-al,garland,
izeki-nayatani,kassabov,pansu,zuk-comptes,zuk-gafa,wang-jdg,wang-cag} in the context
of Hilbert spaces and non-positively curved spaces.
Given a group $G$ acting on a 2-dimensional simplicial complex,
one considers the link of a vertex. This link is a finite graph. If for every vertex, the
first positive eigenvalue of the discrete Laplacian is strictly larger than $1/2$, then 
$G$ has property (T).

The main purpose of this work is to extend the framework of the spectral method, and 
some of the rigidity results, beyond Hilbert spaces.  
Our main result provides such a framework
for the class of reflexive Banach spaces. The difficulty lies in the fact, 
that in the Hilbert space case the
spectral method  relies heavily on orthogonality, in particular self-duality
of Hilbert spaces. When passing to other Banach spaces, dual spaces
of certain Banach spaces and of their subspaces have to be identified,
and this is often a difficult task. We show, that when the representation is isometric such
computations are possible and we can use duality effectively. 

We focus on link graphs constructed using generating sets of a group, as
 in \cite{zuk-gafa}.  For a finite, symmetric generating set  $S$ not containing the identity element
the vertices of the link graph $\mathcal{L}(S)$ are the elements of $S$; generators $s$ and $t$ are
connected by an edge if $s^{-1}t$ is a generator. We will also assume that the graph is equipped with a 
weight $\omega$ on the edges.

Let $X$ be a Banach space and denote by $\kappa_p(S,X)$ be the optimal constant in 
the $p$-Poincar\'{e} inequality for the link graph $\mathcal{L}(S)$ of $G$ and the norm of $X$,
$$\sum_{s\in S} \Vert f(s)-Af\Vert_X^p\deg_{\omega}(s) \le \kappa_p^p \sum_{s\sim t} \Vert f(s)-f(t)\Vert_X^p\omega(s,t),$$
where $Af$ is the mean value of $f$.
When $X=L_2$, the constant $\kappa_2(S,L_2)=\kappa_2(S,\RR)$ can be expressed in terms of 
of the first eigenvalue of the discrete Laplacian. 

Our main result shows that 
sufficiently small constants  in Poincar\'{e} inequalities
for the graph $\mathcal{L}(S)$ imply the required cohomological vanishing. 
Given a number $1<p<\infty$ we denote by $p^*$ its adjoint index, satisfying $\frac{1}{p}+\frac{1}{p^*}=1$.

\begin{thm}\label{main theorem : intro}
Let $X$ be a reflexive Banach space and let 
$G$ be a group generated by a finite, symmetric set $S$, not containing the
identity element. If the link graph $\mathcal{L}(S)$ is connected and for some $1<p<\infty$
the associated Poincar\'{e} constants satisfy
$$\max\set{2^{-\frac{1}{p}} \kappa_{p}(S,X), 2^{-\frac{1}{p^*}} \kappa_{p^*}(S,X^*)}<1,$$
then 
$$H^1(G,\pi)=0,$$
for every isometric representation $\pi$ of $G$ on $X$.
\end{thm}

Clearly, by reflexivity, the same conclusion holds for actions on $X^*$.
Interestingly, the roles of the the two constants in the proof of the above theorem
are not symmetric.
 
We apply Theorem \ref{main theorem : intro} to $L_p$ spaces. The interesting case is $p>2$.
Indeed, when $1<p\le 2$, affine isometric actions exhibit the same behavior 
as for the Hilbert space: $G$ has property (T) if and
only if
any affine action on an $L_p$ space for $1<p\le 2$ has a fixed point \cite{bader-et-al}. 
Also, $G$ admits a metrically proper affine isometric action on the Hilbert space (i.e., is 
a-T-menable) if and only if it admits such an action on any $L_p[0,1]$ for $1<p\le 2$ \cite{nowak}
(see corrected version \cite{nowak-corrected}).
This last property is a strong negation of the existence of a fixed point.

Fixed point properties for groups acting on $L_p$ spaces for $p>2$ are difficult to prove
and only a handful of results are known:
\begin{enumerate}
\item higher rank algebraic groups and their lattices
have fixed points for every affine isometric action on  $L_p$-spaces for all $p> 1$  \cite{bader-et-al}; 
\item in  \cite{mimura} it was proved that $\operatorname{SL}_n(\ZZ[x_1,\dots,x_k])$ has fixed points
for every affine isometric on $L_p$ for every $p>1$ and $n\ge 4$;
\item Naor and Silberman  \cite{naor-silberman} showed that Gromov's random groups, 
containing (in a certain weak sense) expanders in their Cayley graphs, have a fixed point for affine isometric actions
on any $L_p$ for $p>1$;
\item  a general argument  due to Fisher and Margulis (see the proof in \cite{bader-et-al}) 
shows, that for every property (T)
group $G$ there exists a constant $\varepsilon=\varepsilon(G)>0$ such that any affine isometric
action on $L_p$ for $p\in [2,2+\varepsilon)$, has a fixed point. However, their argument does not
give any control over $\varepsilon$.
\end{enumerate}

On the other hand there are also groups which have property (T) but act without 
fixed points on $L_p$ spaces.
One example is furnished by $\operatorname{Sp}(n,1)$, which has property (T)
but has non-vanishing $L_p$-cohomology for $p>4n+2$, by a result of Pansu \cite{pansu-95}.
It also known that there exist hyperbolic groups which have property (T). Nevertheless, 
Bourdon and Pajot \cite{bourdon-pajot} showed that for every hyperbolic group $G$ 
and sufficiently large $p>2$ 
 there is an
affine isometric action on $\ell_p(G)$, whose linear part is the regular representation and which does 
not have a fixed point.
Moreover, Yu \cite{yu-hyperbolic} showed that every hyperbolic group admits
a proper, affine isometric action on $\ell_p(G\times G)$ for all sufficiently large $p>2$ (see also \cite{nica} for another construction).
We refer to \cite{nowak-survey} for a recent survey.

The techniques we use to establish the fixed point properties 
are different from the ones used previously
for general Banach spaces. In particular, we do not need the Howe-Moore 
property to prove our results. This representation theoretic property was 
necessary in \cite{bader-et-al,mimura}.
The expected outcome is also slightly different,
as our methods are not expected to give
fixed points on $L_p$ for all $p>1$.
One reason is that the $p$-Poincar\'{e} constants usually increase above $2^{1/p}$ 
as $p$ grows to infinity.
The second reason is that the main result applies to random hyperbolic groups, which, as 
remarked earlier, act without fixed points on $L_p$-spaces for $p>2$ sufficiently large.
Using our approach we obtain the appropriate vanishing of cohomology $H^1(G,\pi)$ for isometric representations $\pi$ on $L_p$-spaces with
$p\in [2,2+c)$, where the value of  $c$ depends on the group and can be estimated explicitly.
Finally, we point out that our techniques and the Poincar\'{e} inequalities we use, are all linear, in contrast to the non-linear approach used e.g., in \cite{izeki-nayatani,wang-jdg}.
Linearity allows to use interpolation methods effectively and also to obtain additional information about the structure of cohomology in the presence of 
spectral gaps.

To apply Theorem \ref{main theorem : intro} we need to estimate
$p$-Poincar\'{e} constants for $p>2$. Even in classical settings, such as
convex domains in $\RR^n$, estimates exist but exact values of $p$-Poincar\'{e} constants are not known,
except a few special cases.
The situation is even worse for finite graphs, where very few estimates are known 
for cases other than $p=1,2$.
Here we consider the family  of $\widetilde{A}_2$-groups, indexed by powers of primes.
These groups were introduced and studied in \cite{cartwright-et-al-GD,cartwright-et-al}. For every $q$, the
group $G_q$ has a generating 
sets whose link graph is the incidence graph of the finite projective plane over the field $\mathbb{F}_q$.
Spectra of
such graphs were computed in \cite{feit-higman} and 
give, in particular, the exact value of the Poincar\'{e} constant $\kappa_2(S,\mathbb{R})$. 
We use this fact to estimate $\kappa_p(S,L_p)$ 
for these graphs, which allows to obtain for each $q$ a number $c_q$ such that any
 affine isometric
action of $G$ on any $L_p$ has a fixed point for $p\in [2,2+c_q)$. The explicit estimates
of $c_q$ are given in Theorem \ref{theorem : fixed points for A_2-groups}.

As mentioned earlier, our results apply to random hyperbolic groups,
more precisely, to random groups
in the Gromov density model with densities $1/3<d<1/2$, and yield important consequences. These
groups are hyperbolic and have Kazhdan's property (T) with overwhelming probability
\cite{zuk-gafa,kotowscy}.
Using our methods we give lower bounds on $p$ for which fixed points exists for all isometric actions on any 
$L_p$-space. A connection with the conformal dimension arises through  the work 
of Bourdon and Pajot \cite{bourdon-pajot} and allows us to give a lower bound on the 
conformal
dimension of a boundary of a  random hyperbolic group, using an associated link graph, see Section \ref{section : hyperbolic groups}. 
The problem of estimating the conformal dimension of random hyperbolic groups was posed by Gromov \cite[9.B (g)]{gromov} and Pansu \cite[IV.b]{pansu-conformal}.

Our methods also apply to affine actions, whose linear
part is a  uniformly bounded representation on a Hilbert space.
More precisely, we show that $H^1(G,\pi)=0$, whenever $\pi$ is a uniformly bounded 
representation with norms of all operators bounded by a  constant, which depends on
 the group but is close to $\sqrt{2}$ in many cases, see Theorems \ref{theorem: A2tilde vanishing and Banach Mazur distance} and \ref{theorem: vanishing hyperbolic and Banach Mazur distanc}. The question of extending property (T) in the form of cohomological vanishing from unitary to uniformly bounded representation is a
 well-known open question. In particular, Shalom conjectures that for every hyperbolic group there exists a uniformly bounded representation
 with a proper cocycle. The case of $\operatorname{Sp}(n,1)$ is an unpublished result of Shalom.

Finally, we present other applications. We improve the differentiability class of diffeomorphic
 actions
on the circle in the rigidity theorem in \cite{navas-fr,navas} and  estimate eigenvalues of the 
discrete $p$-Laplacian
on finite quotients of groups using Kazhdan-type constants.

\tableofcontents

\section{Actions on Banach spaces}\label{section : actions on Banach spaces}

\subsection{Generating sets and link graphs} 
Let $G$ denote a discrete group generated by a finite symmetric set $S=S^{-1}$.
Let $\mathcal{L}(S)$ denote the following graph, called the link graph of $S$. The vertices are given by 
$\mathcal{V}=S$.
Two vertices  $s,t\in S$ are connected by an edge, denoted $s\sim t$,
if and only if $s^{-1}t\in S$ and $t^{-1}s\in S$.

The set $E$ is defined as follows:
$$E=\setd{(s,t)\in S\times S}{s^{-1}t\in S}.$$
Note that $E$ can be viewed as the set of oriented edges and in $E$ every edge is counted twice.

A \emph{weight} on $\mathcal{L}(S)$ is a function $\omega:E\to (0,\infty)$, such that 
$\omega(s,t)=\omega(t,s)$,
for every $s,t\in S$.
Given a weight on the link graph, the associated \emph{degree} of a vertex $r\in S$ is defined to be
$$\deg_{\omega}(s)= \sum_{t,\ t\sim s} \omega(t,s).$$
A weight $\omega$ on a link graph $\mathcal{L}(S)$ is \emph{admissible} if it satisfies
\begin{enumerate}
\item $\deg_{\omega}(s)=\deg_{\omega}(s^{-1})$, and
\item $\deg_{\omega}(r)=\sum_{(s,t):\ s^{-1}t=r} \omega(s,t)$,
\end{enumerate}
for every $r,s,t\in S$. Note that
$$\sum_{s\in S}\deg_{\omega}(s)=\omega(E).$$
Throughout the article we consider only admissible weights on link graphs of generating sets.

\subsection{Isometric representations and associated Banach spaces}
Let $X$ be a Banach space equipped with a norm $\Vert \cdot \Vert_X$. 
We assume throughout that $X$ is reflexive and that 
$\pi:G\to B(X)$ is 
a representation of $G$ into the bounded invertible operators on $X$. 
Let $X^*$ denote the continuous dual of $X$, with its standard norm.
$X^*$ is naturally equipped with the adjoint representation of $G$, $\overline{\pi}:G\to B(X^*)$, 
$$\overline{\pi}_g =\pi_{g^{-1}}^*.$$
Throughout we fix $1<p<\infty$. The value of $p$ will be chosen later 
depending on the context. We denote by $p^*$  the adjoint index, satisfying $\frac{1}{p}+\frac{1}{p^*}=1$,
and by $L_p$ the space $L_p(\mu)$ for any measure $\mu$ (our results apply with no assumptions
on the measure).
We also use $\simeq$ to denote an isomorphism and $\cong$ to denote an isometric isomorphism
of Banach spaces. 

Define the Banach space $C^{(0,p)} (G,\pi)$ to be the linear space $X$, with the norm
$$\Vert v \Vert_{(0,p)}=\omega(E)^{\frac{1}{p}} \Vert v\Vert_X.$$
Let $\langle\cdot\,,\cdot\rangle_X$ denote the natural pairing between $X$ and $X^*$.
The pairing between $C^{(0,p)} (G,\pi)$ and $C^{(0,p^*)} (G,\overline{\pi})$ is given by
$$\langle v,w\rangle_0=\omega(E)\,\langle v,w\rangle_X.$$
Then $C^{(0,p^*)} (G,\overline{\pi})$ is the dual space of $C^{(0,p)} (G,\pi)$.

We define $C^{(1,p)} (G,\pi)$ to be the finite direct sum $\bigoplus_{s\in S}X$, with the norm given by
$$\Vert f\Vert_{(1,p)}=\left(\sum_{s\in S} \Vert f(s)\Vert_X^p\deg_{\omega}(s)\right)^{\frac{1}{p}}.$$
The dual of $C^{(1,p)} (G,\pi)$ is $C^{(1,p^*)} (G,\overline{\pi})$, via the pairing
$$\langle  f,\phi\rangle_1=\sum_{s\in S}\langle f(s),\phi(s)\rangle_X\deg_{\omega}(s),$$
 for $f\in C^{(1,p)} (G,\pi)$ and $\phi\in C^{(1,p)} (G,\pi)^*$.
 
Define an  operator $Q_{\pi}$ on $C^{(1,p)}(G,\pi)$, 
$$Q_{\pi}f(s)=\pi_s f(s^{-1}).$$
A similar operator $Q_{\overline{\pi}}$ is defined on $C^{(1,p^*)}(G,\overline{\pi})$. The following is straightforward to verify.
\begin{lem}
The operator $Q_{\pi}$ is an involution, satisfying $Q_{\pi}^*=Q_{\overline{\pi}}$.
\end{lem}

Consider the following subspaces of $C^{(1,p)}(G,\pi)$, defined as eigenspaces of $Q_{\pi}$:
$$C^{(1,p)}_+(G,\pi)=\setd{f\in C^{(1,p)}(G,\pi)}{f=Q_{\pi} f},$$
and
$$C^{(1,p)}_- (G,\pi)=\setd{f\in C^{(1,p)} (G,\pi)}{f=-Q_{\pi} f}.$$

\begin{lem}
For any $1<p<\infty$ we have $C^{(1,p)} (G,\pi)=C^{(1,p)}_+ (G,\pi)\oplus C^{(1,p)}_- (G,\pi)$.
\end{lem}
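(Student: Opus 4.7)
The plan is to exhibit an involution $T$ on $C^{(1,p)}(G,X)$ whose $(+1)$ and $(-1)$ eigenspaces are exactly $C^{(1,p)}_+(G,X)$ and $C^{(1,p)}_-(G,X)$, and then decompose any element via the spectral projections $\tfrac12(\Id \pm T)$.

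Concretely, I would define $T : C^{(1,p)}(G,X) \to C^{(1,p)}(G,X)$ by $(Tf)(s) = \pi_s f(s^{-1})$. Since $S = S^{-1}$ is finite and each $\pi_s$ is a bounded operator (indeed an isometry in the case of interest, in which case the identity $\deg_\omega(s) = \deg_\omega(s^{-1})$ makes $T$ itself isometric), $T$ is a well-defined bounded linear map. Because $\pi$ is a homomorphism, $\pi_s \pi_{s^{-1}} = \Id$ and
$$(T^2 f)(s) = \pi_s (Tf)(s^{-1}) = \pi_s \pi_{s^{-1}} f(s) = f(s),$$
so $T^2 = \Id$. The defining relation $f(s^{-1}) = \pi_{s^{-1}} f(s)$ of $C^{(1,p)}_+(G,X)$ is equivalent, upon applying $\pi_s$ to both sides, to $\pi_s f(s^{-1}) = f(s)$, i.e.\ $Tf = f$; analogously $f \in C^{(1,p)}_-(G,X)$ iff $Tf = -f$.

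Setting $P_\pm = \tfrac12(\Id \pm T)$, the relation $T^2 = \Id$ yields $P_\pm^2 = P_\pm$, $P_+ P_- = 0$, and $P_+ + P_- = \Id$; hence every $f \in C^{(1,p)}(G,X)$ splits as
$$f(s) = \tfrac12\bigl(f(s) + \pi_s f(s^{-1})\bigr) + \tfrac12\bigl(f(s) - \pi_s f(s^{-1})\bigr),$$
and a one-line substitution confirms that the two summands satisfy $Tg = \pm g$, so they lie in $C^{(1,p)}_{\pm}(G,X)$ respectively. Uniqueness of the decomposition is automatic: if $f$ lies in both subspaces then $\pi_{s^{-1}} f(s) = -\pi_{s^{-1}} f(s)$, forcing $f(s) = 0$ for every $s \in S$. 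There is no real obstacle in this argument; the only item worth flagging is the boundedness of $T$, which follows immediately from the finiteness of $S$ and the boundedness of each $\pi_s$.
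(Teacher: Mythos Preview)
Your argument is correct and is essentially the same as the paper's: the projections $P_\pm f(s)=\tfrac12\bigl(f(s)\pm\pi_s f(s^{-1})\bigr)$ you write down are exactly those used in the paper, and your involution $T$ is just a clean way to package the verifications that $P_\pm$ are complementary idempotents with the right ranges. The only cosmetic difference is that the paper checks $\pi_{s^{-1}}(P_+f(s))=P_+f(s^{-1})$ and the norm bound $\Vert P_+f\Vert_{(1,p)}\le 2^{(p-1)/p}\Vert f\Vert_{(1,p)}$ by hand, whereas you deduce the projection identities from $T^2=\Id$ and invoke finiteness of $S$ for boundedness.
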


\begin{proof}
We define two bounded operators: $P_{\pi}^+:C^{(1,p)} (G,\pi)\to C^{(1,p)}_+ (G,\pi)$,
$$P_{\pi}^+=\dfrac{I+Q_{\pi}}{2},$$
and $P_{\pi}^-:C^{(1,p)} (G,\pi)\to C^{(1,p)}_- (G,\pi)$,
$$P_{\pi}^-=\dfrac{I-Q_{\pi}}{2}.$$
Clearly $P_{\pi}^++P_{\pi}^-=I$.
Additionally, $C^{(1,p)}_+ (G,\pi)=\ker P_{\pi}^-=\operatorname{im}P_{\pi}^+$ and 
$C^{(1,p)}_- (G,\pi)=\ker P_{\pi}^+=\operatorname{im}P_{\pi}^-$. Indeed,
we have 
$$\pi_{s^{-1}}(P_{\pi}^+f(s))=\dfrac{\pi_{s^{-1}}f(s)+f(s^{-1})}{2}=P_{\pi}^+f(s^{-1}).$$
Finally, $P_{\pi}^+$ restricted to $C^{(1,p)}_+ (G,\pi)$ and $P_{\pi}^-$ restricted to $C^{(1,p)}_- (G,\pi)$ are identity operators, 
so that $P_{\pi}^+$ and $P_{\pi}^-$ are projections onto the required subspaces.
\end{proof}

We now  analyze the structure of $C^{(1,p)} (G,\pi)$  in relation to the one of $C^{(1,p^*)} (G,\overline{\pi})$.\\


\subsection{Duality for $C^{(1,p)}_- (G,\pi)$}
The dual of $C^{(1,p)} (G,\pi)$ is $C^{(1,p^*)} (G,\overline{\pi})$.
Let $P_{\overline{\pi}}^+:C^{(1,p^*)} (G,\overline{\pi})\to C^{(1,p^*)}_+ (G,\overline{\pi})$ and  
$P_{\overline{\pi}}^-:C^{(1,p^*)} (G,\overline{\pi})\to C^{(1,p^*)}_- (G,\overline{\pi})$ denote similar projections as above on the dual level.

\begin{lem}
We have $P_{\overline{\pi}}^+=(P_{\pi}^+)^*$ and $P_{\overline{\pi}}^-=(P_{\pi}^-)^*$.
\end{lem}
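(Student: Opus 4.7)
The plan is to verify the duality directly from the definitions of $P_\pm$, $\overline{P}_\pm$, the pairing $\langle\cdot,\cdot\rangle_1$, and the adjoint representation $\overline{\pi}_g = \pi_{g^{-1}}^*$. Concretely, for $f \in C^{(1,p)}(G,X)$ and $\phi \in C^{(1,p^*)}(G,X^*)$, I would expand
\[
\langle P_+ f,\phi\rangle_1 = \tfrac12\sum_{s\in S}\langle f(s),\phi(s)\rangle_X\deg_\omega(s) + \tfrac12\sum_{s\in S}\langle \pi_s f(s^{-1}),\phi(s)\rangle_X\deg_\omega(s),
\]
and then rewrite each summand of the second sum using the defining relation of the adjoint, namely $\langle \pi_s v,\psi\rangle_X = \langle v,\pi_s^*\psi\rangle_X = \langle v,\overline{\pi}_{s^{-1}}\psi\rangle_X$.

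The key step is then a change of variable $s \mapsto s^{-1}$ in the second sum. This is where the admissibility of the weight enters: condition (1), $\deg_\omega(s) = \deg_\omega(s^{-1})$, is exactly what is needed so that the reindexing preserves the measure on $S$. After the substitution, the second sum becomes $\tfrac12\sum_s \langle f(s),\overline{\pi}_s \phi(s^{-1})\rangle_X\deg_\omega(s)$, and combining with the first sum gives
\[
\langle P_+ f,\phi\rangle_1 = \sum_{s\in S}\bigl\langle f(s),\tfrac{\phi(s)+\overline{\pi}_s\phi(s^{-1})}{2}\bigr\rangle_X\deg_\omega(s) = \langle f,\overline{P}_+\phi\rangle_1,
\]
which is the identity $P_+^* = \overline{P}_+$. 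The identity $P_-^* = \overline{P}_-$ is obtained by the same computation with a sign flip, or equivalently by using $P_- = \mathrm{Id} - P_+$ together with the fact that the identity on $C^{(1,p)}(G,X)$ is dual to the identity on $C^{(1,p^*)}(G,X^*)$ under the pairing $\langle\cdot,\cdot\rangle_1$.

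There is no real obstacle here; the argument is a short bookkeeping exercise. The only point requiring attention is to invoke admissibility of $\omega$ at the reindexing step, since without $\deg_\omega(s) = \deg_\omega(s^{-1})$ the substitution would introduce an unwanted ratio of degrees and the adjoint formula would fail.
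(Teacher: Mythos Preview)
Your proposal is correct and follows essentially the same route as the paper: a direct expansion of the pairing, an application of the defining relation $\overline{\pi}_{s^{-1}}=\pi_s^*$, and a reindexing $s\mapsto s^{-1}$ using $\deg_\omega(s)=\deg_\omega(s^{-1})$. The only cosmetic difference is that the paper carries out the computation for $P_-$ first and then says ``similarly for $P_+$'', while you do $P_+$ and note that $P_-=\mathrm{Id}-P_+$ gives the other identity for free; your explicit mention of where admissibility condition (1) is used is a welcome clarification that the paper leaves implicit.
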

\begin{proof}
Let $f\in C^{(1,p)} (G,\pi)$ and $\phi\in C^{(1,p^*)} (G,\overline{\pi})$. Then
$$(P_{\pi}^-)^*=\dfrac{1}{2}\left(I-Q_{\pi}\right)^*=\dfrac{1}{2}\left(I-Q_{\overline{\pi}}\right)=P_{\overline{\pi}}^-.$$
Similarly for $P_{\pi}^+$.
\end{proof}

\begin{lem}
We have the following isomorphisms: $C^{(1,p)}_- (G,\pi)^*\simeq C^{(1,p^*)}_- (G,\overline{\pi})$ and 
$C^{(1,p)}_+ (G,\pi)^*\simeq C^{(1,p^*)}_+ (G,\overline{\pi})$.
\end{lem}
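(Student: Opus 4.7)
The plan is to exploit the decomposition $C^{(1,p)}(G,X)=C^{(1,p)}_+(G,X)\oplus C^{(1,p)}_-(G,X)$ from the previous lemma together with the adjoint identification $P^*_\pm=\overline{P}_\pm$ just established. In general, a complemented subspace of a Banach space has dual canonically isomorphic to the image of the adjoint projection, and that is exactly the content we want.

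First I would fix the pairing $\langle\cdot,\cdot\rangle_1$ as the identification of $C^{(1,p)}(G,X)^*$ with $C^{(1,p^*)}(G,X^*)$. Define the restriction map
$$R:C^{(1,p^*)}(G,X^*)\longrightarrow C^{(1,p)}_-(G,X)^*,\qquad R(\phi)=\phi|_{C^{(1,p)}_-(G,X)}.$$
This map is clearly bounded, and it is surjective by Hahn--Banach: given $\psi\in C^{(1,p)}_-(G,X)^*$, the functional $\psi\circ P_-$ on $C^{(1,p)}(G,X)$ is a bounded extension of $\psi$, with norm at most $\Vert P_-\Vert\,\Vert\psi\Vert$. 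Its kernel is the annihilator of $C^{(1,p)}_-(G,X)=\operatorname{im}P_-$, which equals $\ker P_-^*=\operatorname{im}\overline{P}_+=C^{(1,p^*)}_+(G,X^*)$ by the previous lemma.

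Next I would restrict $R$ to $C^{(1,p^*)}_-(G,X^*)=\operatorname{im}\overline{P}_-=\ker\overline{P}_+$. Since this subspace is complementary to $\ker R$, the restriction $R|_{C^{(1,p^*)}_-(G,X^*)}$ is a bounded bijection onto $C^{(1,p)}_-(G,X)^*$. Its inverse sends $\psi$ to $\psi\circ P_-\in C^{(1,p^*)}(G,X^*)$; this element lies in $C^{(1,p^*)}_-(G,X^*)$ because $(\psi\circ P_-)\circ P_+=\psi\circ(P_-P_+)=0$, i.e.\ $\overline{P}_+(\psi\circ P_-)=0$. Boundedness of this inverse follows either from the open mapping theorem or directly from the estimate $\Vert\psi\circ P_-\Vert_{(1,p^*)}\le\Vert P_-\Vert\,\Vert\psi\Vert$. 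The analogous argument with the roles of $+$ and $-$ swapped yields the second isomorphism.

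I expect the main subtlety to be purely notational: keeping straight that a functional on $C^{(1,p)}_-(G,X)$ viewed through the pairing gives an element of the full dual $C^{(1,p^*)}(G,X^*)$, and then verifying that this element actually satisfies the equivariance condition $\phi(s^{-1})=-\overline{\pi}_{s^{-1}}\phi(s)$ defining $C^{(1,p^*)}_-(G,X^*)$. Both checks reduce to the two properties $P_-^2=P_-$ and $P_+P_-=0$ together with the adjoint identification from the previous lemma; no additional estimates beyond the boundedness of $P_\pm$ already recorded there are needed.
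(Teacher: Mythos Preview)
Your proposal is correct and follows essentially the same route as the paper: both arguments identify the annihilator of $C^{(1,p)}_-(G,X)$ inside $C^{(1,p^*)}(G,X^*)$ as $C^{(1,p^*)}_+(G,X^*)$ and then use the complemented splitting to pass to $C^{(1,p^*)}_-(G,X^*)$. The only cosmetic difference is that the paper computes the annihilator by a direct pairing calculation and then writes the dual as a quotient, whereas you invoke $P_-^*=\overline{P}_-$ from the preceding lemma and phrase everything via the restriction map; the content is the same.
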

\begin{proof}
Consider $f\in C^{(1,p)}_- (G,\pi)$ and let $\phi\in C^{(1,p^*)} (G,\overline{\pi})$. Then
$$\langle f,\phi\rangle_1=\langle -Q_{\pi}f,\phi\rangle_1=\langle  f, -Q_{\overline{\pi}} \phi\rangle_1.$$
Therefore,
$$2\langle f, P_{\overline{\pi}}^+\phi\rangle_1=0,$$
which shows that $C^{(1,p^*)}_+ (G,\overline{\pi})$ annihilates $C^{(1,p)}_- (G,\pi)$. 

Conversely, if $\phi\in C^{(1,p^*)} (G,\overline{\pi})$ annihilates $C^{(1,p)}_- (G,\pi)$, then 
$$\langle P_{\pi}^-f,\phi\rangle_1 =\langle f, P_{\overline{\pi}}^-\phi\rangle=0$$
for every $f\in C^{(1,p)} (G,\pi)$. Consequently,  $P_{\overline{\pi}}^-\phi=0$ and $\phi=P_{\overline{\pi}}^+\phi$, which
means it belongs to $C^{(1,p^*)}_+ (G,\overline{\pi})$. Thus,
$$C^{(1,p)}_- (G,\pi)^*\cong C^{(1,p)} (G,\overline{\pi})\Big/C^{(1,p^*)}_+ (G,\overline{\pi})\simeq C^{(1,p^*)}_- (G,\overline{\pi}).$$
Other cases are proved similarly.
\end{proof}
However, in order to identify the dual of $C^{(1,p)}_- (G,\pi)$ an isomorphism is not sufficient,
we need an isometric isomorphism instead. For a representation $\pi$, $C^{(1,p)}_- (G,\pi)^*$ is 
in general not isometrically isomorphic to $C^{(1,p^*)}_- (G,\overline{\pi})$.
However, it turns out that this additional property holds when the representation $\pi$ is isometric.

\begin{thm}\label{theorem : duality for C_-}
Assume that $\pi_s$ is an isometry for every $s\in S$. Then we have the following isometric isomorphisms:
$C^{(1,p)}_- (G,\pi)^*\cong C^{(1,p^*)}_- (G,\overline{\pi})$ and $C^{(1,p)}_+ (G,\pi)^*\cong C^{(1,p^*)}_+ (G,\overline{\pi})$. 
\end{thm}

\begin{proof}
Consider $C^{(1,p^*)} (G,\overline{\pi}) \Big/C^{(1,p^*)}_+ (G,\overline{\pi})$, which consists 
of cosets $[\phi]=C^{(1,p^*)}_+ (G,\overline{\pi})+\phi$, for $\phi\in C^{(1,p^*)} (G,\overline{\pi})$. 
We need to show that  for each such coset $N$,
$\inf\setd{ \Vert \phi\Vert}{N=[\phi]}$ is attained when $\phi\in C^{(1,p^*)}_- (G,\overline{\pi})$.

For $\phi\in C^{(1,p^*)}_- (G,\overline{\pi})$ and $\psi\in C^{(1,p^*)}_+ (G,\overline{\pi})$, we have
$$ \Vert \phi+\psi\Vert_{(1,p^*)}=\Vert -Q_{\overline{\pi}}\phi + Q_{\overline{\pi}} \psi\Vert_{(1,p^*)}=\Vert \phi-\psi\Vert_{(1,p^*)},$$
since the involution $Q_{\overline{\pi}}$ is an isometry, whenever $\pi$, or equivalently $\overline{\pi}$, is an isometric representation.
Now consider the coset $[\phi]$ for $\phi\in C^{(1,p^*)}_- (G,\overline{\pi})$ and 
consider another element, $\zeta\in C^{(1,p^*)} (G,\overline{\pi})$, such that 
$\zeta-\phi\in C^{(1,p^*)}_+ (G,\overline{\pi})$, so that $\zeta=\phi+\psi$, for some  $\psi\in C^{(1,p^*)}_+ (G,\overline{\pi})$. 
This implies 
$$\Vert \phi\Vert_{(1,p^*)}\le \dfrac{\Vert \phi-\psi\Vert_{(1,p^*)}+
\Vert \phi+\psi\Vert_{(1,p^*)}}{2}=\Vert \zeta\Vert_{(1,p^*)},$$
which proves the claim.
\end{proof}
This last statement allows us to identify $C^{(1,p)}_- (G,\pi)^*$ with $C^{(1,p^*)}_- (G,\overline{\pi})$ for isometric 
representations
and is crucial in the proof of the main theorem.

\subsection{The operator $\delta$}
We define the operator  $\delta_{\pi}:C^{(0,p)} (G,\pi)\to C^{(1,p)}_- (G,\pi)$ by the formula
$$\delta_{\pi} v(s)=v-\pi_sv.$$
Theorem \ref{theorem : duality for C_-} allows to express the adjoint of $\delta_{\pi}$ in a 
way, which is convenient for calculations. We have the following explicit formula for  $\delta_{\pi}^*$.
\begin{lem}
The operator
$\delta_{\pi}^*:C^{(1,p^*)}_- (G,\overline{\pi})\to C^{(0,p^*)} (G,\overline{\pi})$ is given by 
\begin{equation}\label{equation : formula for delta*}
\delta_{\pi}^*\phi= 2\sum_{s\in S}\phi(s)\dfrac{\deg_{\omega}(s)}{\omega(E)}.
\end{equation}
\end{lem}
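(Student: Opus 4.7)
The plan is a direct duality computation. I would fix $\phi \in C^{(1,p^*)}_-(G,X^*)$, which by Theorem \ref{theorem : duality for C_-} represents a general element of $C^{(1,p)}_-(G,X)^*$ isometrically, and expand $\langle \delta v, \phi\rangle_1$ using the definition of $\delta$ and of the pairing $\langle\cdot,\cdot\rangle_1$, then compare with $\langle v, \delta^*\phi\rangle_0 = \omega(E)\langle v,\delta^*\phi\rangle_X$.

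First I would write
\begin{equation*}
\langle \delta v,\phi\rangle_1 = \sum_{s\in S}\langle v - \pi_s v,\phi(s)\rangle_X\deg_\omega(s) = \sum_{s\in S}\langle v,\phi(s)\rangle_X\deg_\omega(s) - \sum_{s\in S}\langle v,\overline{\pi}_{s^{-1}}\phi(s)\rangle_X\deg_\omega(s),
\end{equation*}
using the identity $\pi_s^* = \overline{\pi}_{s^{-1}}$ to transport $\pi_s$ to the dual side. The key step is then to rewrite the second sum by substituting $s \mapsto s^{-1}$, using admissibility of $\omega$ (which gives $\deg_\omega(s) = \deg_\omega(s^{-1})$), and invoking the defining antisymmetry $\phi(s^{-1}) = -\overline{\pi}_{s^{-1}}\phi(s)$ of elements of $C^{(1,p^*)}_-(G,X^*)$, which yields $\overline{\pi}_s\phi(s^{-1}) = -\phi(s)$. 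This turns the subtracted sum into the \emph{negative} of $\sum_s \langle v,\phi(s)\rangle_X \deg_\omega(s)$, so the two contributions add rather than cancel, giving
\begin{equation*}
\langle \delta v,\phi\rangle_1 = 2\sum_{s\in S}\langle v,\phi(s)\rangle_X\deg_\omega(s).
\end{equation*}

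Finally, setting this equal to $\omega(E)\langle v,\delta^*\phi\rangle_X$ and letting $v$ range over $X$ reads off the stated formula
$$\delta^*\phi = \frac{2}{\omega(E)}\sum_{s\in S}\phi(s)\deg_\omega(s).$$

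The computation is mostly bookkeeping; the only genuine subtlety is recognizing that the $C^{(1,p^*)}_-$ antisymmetry of $\phi$ is exactly what flips a sign so that the two sums reinforce, and that this manipulation requires the symmetry $\deg_\omega(s) = \deg_\omega(s^{-1})$ from admissibility of $\omega$. The isometric identification from Theorem \ref{theorem : duality for C_-} is what legitimizes representing $\delta^*\phi$ in $C^{(0,p^*)}(G,X^*)$ (i.e., as an element of $X^*$) via the pairing $\langle\cdot,\cdot\rangle_0$ rather than having to pass through a quotient, so the formula genuinely defines the adjoint and not merely a representative modulo $C^{(1,p^*)}_+(G,X^*)$.
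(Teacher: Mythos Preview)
Your proof is correct and follows essentially the same route as the paper: expand $\langle \delta v,\phi\rangle_1$, move $\pi_s$ to the dual side via $\pi_s^*=\overline{\pi}_{s^{-1}}$, use the antisymmetry $\phi(s^{-1})=-\overline{\pi}_{s^{-1}}\phi(s)$ together with $\deg_\omega(s)=\deg_\omega(s^{-1})$ to double the first sum, and then identify the result with $\langle v,\cdot\rangle_0$. The only cosmetic difference is that the paper applies the antisymmetry before reindexing, whereas you reindex $s\mapsto s^{-1}$ first and then use $\overline{\pi}_s\phi(s^{-1})=-\phi(s)$; these are the same computation in a different order.
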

\begin{proof}
\begin{eqnarray*}
\langle \delta_{\pi} v, \phi\rangle_1&=&\sum_{s\in S}\langle v-\pi_sv,\phi(s)\rangle_X\deg_{\omega}(s)\\
&=&\sum_{s\in S}\left(\langle v,\phi(s)\rangle_X-\langle v,\overline{\pi}_{s^{-1}}\phi(s)\rangle_X\right)\deg_{\omega}(s)\\
&=&\sum_{s\in S}\left(\langle v,\phi(s),\rangle_X+\langle v,\phi(s^{-1})\rangle_X\right)\deg_{\omega}(s)\\
&=&\left\langle v, 2\sum_{s\in S}\phi(s)\dfrac{\deg_{\omega}(s)}{\omega(E)}\right\rangle_0. 
\end{eqnarray*}
\end{proof}

It is now clear that $\delta_{\pi}^*$ admits a continuous extension to the space $C^{(1,p^*)} (G,\overline{\pi})$, 
defined by the right hand side of the formula (\ref{equation : formula for delta*}).

\subsection{The operators $D$, $L$, and $d$}
We define the Banach space,
$$C^{(2,p)} (G,\pi)=\setd{ \eta\in \bigoplus_{(s,t)\in E}X}{\eta(s,t)=-\eta(t,s)},$$
equipped with the norm 
$$\Vert \eta\Vert_{(2,p)}=\left(\sum_{(s,t)\in E} \Vert \eta(s,t)\Vert^p_X\omega(s,t)\right)^{\frac{1}{p}}.$$
We also define operators $D,L_{\pi}:C^{(1,p)}_- (G,\pi)\to C^{(2,p)} (G,\pi)$ by the formulas
$$D f(s,t)=f(t)-f(s),$$ 
$$L_{\pi} f(s,t)=\pi_sf(s^{-1}t).$$
Then the operator $d_{\pi}$ is defined by 
$$d_{\pi}=L_{\pi}-D.$$
Similarly, define $\overline{D}$, $L_{\overline{\pi}}$ and $d_{\overline{\pi}}$ for the adjoint  representation.

\begin{lem}
Let $\pi$ be an isometric representation.
The operator $L_{\pi}$ is an isometry onto its image. Consequently, 
$D$ is an isometry as well when restricted to $\ker d_{\pi}$. (The same claim holds for
$L_{\overline{\pi}}$ and $\overline{D}$, restricted to $\ker d_{\overline{\pi}}$).

\end{lem}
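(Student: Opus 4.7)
The plan is to unfold the definitions and use two ingredients: the fact that $\pi_s$ is an isometry on $X$, and condition (2) of admissibility, which says that $\deg_{\omega}(r) = \sum_{(s,t): s^{-1}t=r} \omega(s,t)$. These are exactly the two ingredients required to rearrange the sum defining $\Vert Lf\Vert_{(2,p)}^p$ into the sum defining $\Vert f\Vert_{(1,p)}^p$.

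The first step is a sanity check: verify that $L$ actually maps into $C^{(2,p)}(G,X)$, i.e.\ that $Lf$ is antisymmetric in $(s,t)$. For this I would use that $f \in C^{(1,p)}_-(G,X)$ satisfies $f(r^{-1}) = -\pi_{r^{-1}} f(r)$ for all $r \in S$. Applying this with $r = s^{-1}t$ gives $f(t^{-1}s) = -\pi_{t^{-1}s}\, f(s^{-1}t)$, and then the cocycle identity $\pi_t \pi_{t^{-1}s} = \pi_s$ yields
\[
Lf(t,s) = \pi_t f(t^{-1}s) = -\pi_s f(s^{-1}t) = -Lf(s,t),
\]
so $Lf \in C^{(2,p)}(G,X)$.

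The second step is the norm computation. Because each $\pi_s$ is isometric, $\Vert Lf(s,t)\Vert_X = \Vert f(s^{-1}t)\Vert_X$, hence
\[
\Vert Lf\Vert_{(2,p)}^p = \sum_{(s,t)\in E} \Vert f(s^{-1}t)\Vert_X^p \,\omega(s,t) = \sum_{r\in S} \Vert f(r)\Vert_X^p \sum_{\substack{(s,t)\in E\\ s^{-1}t=r}} \omega(s,t) = \sum_{r\in S} \Vert f(r)\Vert_X^p \deg_{\omega}(r) = \Vert f\Vert_{(1,p)}^p,
\]
where the middle equality uses admissibility condition (2). This shows $L$ is an isometry onto its image.

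For the consequence about $D$, simply observe that $d = L - D$, so on $\ker d$ one has $Df = Lf$, and therefore $\Vert Df\Vert_{(2,p)} = \Vert Lf\Vert_{(2,p)} = \Vert f\Vert_{(1,p)}$; the dual statement for $\overline{L}$ and $\overline{D}$ is identical. There is no real obstacle here: the only subtlety is noticing that the antisymmetry of $Lf$ requires both the $\pm$--relation defining $C^{(1,p)}_-(G,X)$ and the homomorphism property of $\pi$, and that the degree identity is exactly admissibility (2) — neither step uses the Poincar\'{e} hypothesis, duality, or reflexivity.
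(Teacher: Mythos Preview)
Your proof is correct and follows the same approach as the paper: a direct norm computation using that $\pi_s$ is isometric together with admissibility condition (2) to collapse the sum over $E$ into a sum over $S$. You add the explicit verification that $Lf$ is antisymmetric and spell out the $\ker d$ consequence, both of which the paper leaves implicit.
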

\begin{proof}
By direct calculation,
\begin{eqnarray*}
\Vert L_{\pi}f\Vert_{(2,p)}^p&=&\sum_{(s,t)\in E} \Vert \pi_sf(s^{-1}t)\Vert_X^p\omega(s,t)\\
&=&\sum_{s\in S}\Vert f(s)\Vert_X^p\deg_{\omega}(s)\\
&=&\Vert f\Vert_{(1,p)}^p.
\end{eqnarray*}
\end{proof}

The kernel of the operator $\overline{D}$ consists of the constant functions on $S$, 
which is a complemented 
subspace of $C^{(1,p)} (G,\pi)$. The projection onto this subspace is given by
$$\overline{M}\phi(s)=\sum_{s\in S}\phi(s)\dfrac{\deg_{\omega}(s)}{\omega(E)}.$$
Note that for $\phi\in C^{(1,p^*)}_- (G,\overline{\pi})$ we have
$$\overline{M}\phi(s)=\dfrac{1}{2}\delta_{\pi}^*\phi,$$
for every $s\in S$.

\begin{lem}\label{lemma : norm of Mf and delta f are equal}
Let $\phi\in C^{(1,p^*)}_- (G,\overline{\pi})$. Then  
$\Vert \overline{M}\phi\Vert_{(1,p^*)}=\dfrac{1}{2}\Vert \delta_{\pi}^*\phi \Vert_{(0,p^*)}$.
\end{lem}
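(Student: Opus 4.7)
The plan is to unfold the definitions and carry out a direct calculation. The key observation has already been recorded just before the statement: for $\phi \in C^{(1,p^*)}_-(G,X^*)$, the function $\overline{M}\phi$ is the constant function on $S$ with value $\tfrac{1}{2}\delta^*\phi$. So $\Vert\overline{M}\phi\Vert_{(1,p^*)}$ reduces to the $(1,p^*)$-norm of a constant, which factors out the $X^*$-norm of $\tfrac{1}{2}\delta^*\phi$ and a sum of weighted degrees.

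Concretely, I would write
\[
\Vert \overline{M}\phi \Vert_{(1,p^*)}^{p^*} = \sum_{s \in S}\bigl\Vert \tfrac{1}{2}\delta^*\phi \bigr\Vert_{X^*}^{p^*}\deg_{\omega}(s) = \frac{1}{2^{p^*}}\,\Vert \delta^*\phi\Vert_{X^*}^{p^*}\sum_{s \in S}\deg_{\omega}(s),
\]
and then invoke the identity $\sum_{s \in S}\deg_{\omega}(s)=\omega(E)$ stated in the paragraph on admissible weights. This gives $\Vert \overline{M}\phi\Vert_{(1,p^*)}^{p^*} = \tfrac{1}{2^{p^*}}\omega(E)\,\Vert \delta^*\phi\Vert_{X^*}^{p^*}$.

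Taking $p^*$-th roots and recalling that $\Vert v\Vert_{(0,p^*)} = \omega(E)^{1/p^*}\Vert v\Vert_{X^*}$ by the definition of the $C^{(0,p^*)}$-norm, the right-hand side becomes $\tfrac{1}{2}\Vert \delta^*\phi\Vert_{(0,p^*)}$, completing the proof. There is no genuine obstacle here: the only things being used are the identification $\overline{M}\phi = \tfrac{1}{2}\delta^*\phi$ on the antisymmetric subspace (which follows immediately from the explicit formulas for $\overline{M}$ and $\delta^*$ established in the preceding lemma), the admissibility identity $\sum_s\deg_{\omega}(s)=\omega(E)$, and the two normalizations built into the definitions of the spaces $C^{(1,p^*)}$ and $C^{(0,p^*)}$.
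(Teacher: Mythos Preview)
Your proof is correct and follows essentially the same approach as the paper's own proof: both start from the identification $\overline{M}\phi(s)=\tfrac{1}{2}\delta^*\phi$, pull the constant vector outside the sum, use $\sum_{s\in S}\deg_{\omega}(s)=\omega(E)$, and recognize the result as $\tfrac{1}{2^{p^*}}\Vert\delta^*\phi\Vert_{(0,p^*)}^{p^*}$. The paper's version is only marginally terser, collapsing the last two steps into one line.
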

\begin{proof}
We have the following equalities:
\begin{eqnarray*}
\Vert \overline{M}\phi\Vert_{(1,p^*)}^{p^*}&=&\sum_{s\in S}\left\Vert \dfrac{\delta_{\pi}^* \phi}{2}\right\Vert_X^{p^*}
\deg_{\omega}(s)\\
&=&\dfrac{1}{2^{p^*}}\Vert \delta_{\pi}^* \phi\Vert^{p^*}_X \left(\sum_{s\in S}\deg_{\omega}(s) \right)\\
&=&\dfrac{1}{2^{p^*} }\Vert \delta_{\pi}^* \phi\Vert^{p^*}_0.
\end{eqnarray*}
\end{proof}

\subsection{Sufficients conditions for vanishing of cohomology} 
Given a group $G$, the 1-cocycles associated to $\pi$ are functions $b:G\to X$ satisfying
the cocycle condition,
$$b_g=\pi_gb_h+b_g,$$
for every $g,h\in G$.
The coboundaries are those cocycles which are of the form $$b_g=v-\pi_gv$$ for some $v\in X$ and 
all $g\in G$. The first cohomology of $G$ with coefficients in $\pi$ is defined to be
$H^1(G,\pi)=\text{cocycles}\big/\text{coboundaries}$.

An affine action of $G$ on $X$ is defined as
$$A_g v=\pi_gv+b_g,$$
where $\pi$ is called the linear part of the action and $b$ is a cocycle.
Vanishing of cohomology $H^1(G,\pi)$ is equivalent to the existence of a fixed point
for any affine action with linear part $\pi$.
We refer to \cite{bekka-et-al} for background on cohomology and affine actions.

The reader can easily verify the following lemma.
\begin{lem}
$\operatorname{image}(\delta_{\pi})\subseteq \ker d_{\pi}$. 
\end{lem}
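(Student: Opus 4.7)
The plan is to prove $d\circ\delta = 0$ by direct pointwise computation on an edge $(s,t) \in E$. I would separately evaluate $L(\delta v)(s,t)$ and $D(\delta v)(s,t)$ for an arbitrary $v \in X$ and observe that they are equal, from which $d(\delta v)(s,t) = L(\delta v)(s,t) - D(\delta v)(s,t) = 0$ follows immediately.

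First I would substitute $\delta v(r) = v - \pi_r v$ into the definition of $D$:
$$D(\delta v)(s,t) = \delta v(t) - \delta v(s) = (v - \pi_t v) - (v - \pi_s v) = \pi_s v - \pi_t v.$$
Then, using the homomorphism property $\pi_s \pi_{s^{-1}t} = \pi_t$, I would compute
$$L(\delta v)(s,t) = \pi_s \delta v(s^{-1}t) = \pi_s\bigl(v - \pi_{s^{-1}t} v\bigr) = \pi_s v - \pi_t v.$$
The two expressions agree on every oriented edge, so $d(\delta v) = 0$, which is exactly $\operatorname{image}(\delta) \subseteq \ker d$.

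As a sanity check, one should verify that $\delta v$ genuinely lies in $C^{(1,p)}_-(G,X)$, i.e.\ that it satisfies the antisymmetry relation $\delta v(s^{-1}) = -\pi_{s^{-1}} \delta v(s)$; this is immediate since $\pi_{s^{-1}}(v - \pi_s v) = \pi_{s^{-1}} v - v = -(v - \pi_{s^{-1}} v) = -\delta v(s^{-1})$. There is no real obstacle here: the statement is the standard cochain complex identity $d\circ\delta=0$ in this setting, and it is forced by the multiplicativity of $\pi$.
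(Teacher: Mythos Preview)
Your proof is correct and is exactly the direct verification the paper has in mind when it writes ``The reader can easily verify the following lemma''; the paper gives no argument of its own beyond that remark. Your computation of $D(\delta v)$ and $L(\delta v)$ and the sanity check that $\delta v\in C^{(1,p)}_-(G,X)$ are precisely what is required.
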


This fact allows to formulate the following sufficient condition for the fixed point property for affine actions on $X$.

\begin{prop}\label{proposition : delta onto ker d implies fixed point}
If  the image of $\delta_{\pi}$ is equal to $\ker d_{\pi}$,
then $H^1(G,\pi)=0$.
\end{prop}
\begin{proof}
Let $b:G\to X$ be a 1-cocycle for $\pi$ and let $b'$ denote the restriction of $b$
to the generating set $S$. The cocycle condition implies that $b'\in C^{(1,p)}_- (G,\pi)$ and, 
furthermore,  that $b'\in \ker d_{\pi}$. 
If $\delta_{\pi}$ is onto $\ker d_{\pi}$, then $b'=\delta_{\pi} v$ for some $v\in X$. Since $b$ is trivial on the
generators, we conclude  that $b$ is trivial.
\end{proof}
It is important to remark that the technical details here are slightly different than
in \cite{zuk-gafa}, where the original condition in terms of almost invariant vectors is deduced,
and one needs to use the Delorme-Guichardet theorem to obtain cohomological vanishing.
The above argument allows to bypass the use of the Delorme-Guichardet theorem and obtain
vanishing of cohomology directly.

Note that the image of $\delta_{\pi}$ is always properly contained in $C^{(1,p)}_- (G,\pi)$.
By the open mapping theorem we also have the following 
\begin{cor}\label{corollary : Poincare constant on Cayley graph}
Assume $\pi$ does not have invariant vectors. If $\delta$ is onto $\ker d$
then there is a constant $K>0$ such that 
$$\sup_{s\in S}\Vert v-\pi_sv\Vert_X\ge K\Vert v\Vert_X,$$
for every $v\in X$.
\end{cor}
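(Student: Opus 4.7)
The plan is to apply the open mapping theorem to $\delta$ viewed as a map onto $\ker d$. Two preliminary facts are needed: injectivity of $\delta$ under the no-invariant-vectors assumption, and the fact that $\ker d$ is a Banach space in its own right.

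For injectivity, I would observe that if $\delta v = 0$ then $v - \pi_s v = 0$ for every $s \in S$, i.e.\ $\pi_s v = v$ for all generators $s$. Since $S$ generates $G$, an easy induction on word length gives $\pi_g v = v$ for every $g \in G$, so $v$ is a $\pi$-invariant vector and hence $v=0$ by hypothesis. For the Banach-space structure on $\ker d$, the space $C^{(1,p)}_-(G,X)$ is closed in $C^{(1,p)}(G,X)$ because it is the image (equivalently, the kernel of the complementary projection) of the bounded projection $P_-$ from the earlier lemma, and $d = L - D$ is bounded, so $\ker d \subseteq C^{(1,p)}_-(G,X)$ is closed in a Banach space.

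With these in hand, the hypothesis that $\operatorname{image}(\delta) = \ker d$ says that $\delta : X \to \ker d$ is a continuous bijection between Banach spaces. The open mapping theorem (Banach's isomorphism theorem) then yields a constant $C > 0$ such that
$$\Vert v\Vert_X \le C\,\Vert \delta v\Vert_{(1,p)}$$
for every $v \in X$.

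It remains to convert the $(1,p)$-norm into a supremum, which is the routine estimate
$$\Vert \delta v\Vert_{(1,p)}^p = \sum_{s\in S}\Vert v-\pi_s v\Vert_X^p\,\deg_{\omega}(s) \le \Bigl(\sup_{s\in S}\Vert v-\pi_s v\Vert_X\Bigr)^{p}\sum_{s\in S}\deg_{\omega}(s) = \omega(E)\,\Bigl(\sup_{s\in S}\Vert v-\pi_s v\Vert_X\Bigr)^{p}.$$
Combining the two inequalities gives the desired estimate with $K = \bigl(C\,\omega(E)^{1/p}\bigr)^{-1}$. There is no real obstacle here; the only point to be careful about is verifying that $\ker d$ is closed so that the open mapping theorem is applicable, and that the no-invariant-vectors hypothesis is precisely what is needed for injectivity of $\delta$.
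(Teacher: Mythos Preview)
Your proof is correct and follows exactly the approach the paper intends: the paper simply states ``By the open mapping theorem we also have the following'' and leaves the details to the reader. You have supplied precisely those details---injectivity of $\delta$ from the no-invariant-vectors hypothesis, closedness of $\ker d$, and the routine passage from the $(1,p)$-norm to the supremum---all of which are what the paper implicitly relies on.
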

The constant $K$ in the above statement can be viewed as a version of Kazhdan constant 
for  isometric representations of $G$ on $X$.

\section{Poincar\'{e} inequalities associated to norms}

Consider a weighted, finite graph $\Gamma=(\mathcal{V},\mathcal{E})$, a number $p\ge 1$
and a Banach space $X$. The $p$-Poincar\'{e} inequality for $\Gamma$ and the norm of $X$
is the inequality
\begin{equation}\label{equation : standard p-X Poincare inequality on a graph}
\left(\sum_{x\in \mathcal{V}} \Vert f(x)-Af\Vert_X^p\deg_{\omega}(x)\right)^{\frac{1}{p}}\le \kappa_p \left(\sum_{x\sim y} \Vert f(x)-f(y)\Vert_X^p\omega(x,y)\right)^{\frac{1}{p}},
\end{equation}
for all functions $f:\mathcal{V}\to X$, where $Af=\frac{1}{2 \omega(\mathcal{E}) }\sum_{x\in \mathcal{V}} f(x)\deg_{\omega}(x)$.
On a finite graph, the inequality (\ref{equation : standard p-X Poincare inequality on a graph}) is always satisfied for some $\kappa_p>0$.

\begin{defin}
Let $\mathcal{L}(S)$ be a link graph of a generating set $S$, with weight $\omega$. 
For a Banach space $X$ and a number $1<p<\infty$ 
we define the constant  $\kappa_p(S,X)$
of $\mathcal{L}(S)$ by setting 
$$\kappa_p(S,X)=\inf \kappa_p,$$
where the infimum is taken over all $\kappa_p$, for which inequality 
(\ref{equation : standard p-X Poincare inequality on a graph}) holds.
\end{defin}
We will omit the reference to $\omega$ in the notation for $\kappa$.

\subsubsection{Hilbert spaces}
When $X=L_2$ is the Hilbert space this constant is related to the smallest positive eigenvalue $\lambda_1$ 
of the Laplacian 
on the graph  as follows:
$$\kappa_2(S,L_2)=\sqrt{\lambda_1^{-1}},$$
since the latter can be defined via the variational expression and the Rayleigh quotient.

\subsubsection{$L_p$-spaces, $1\le p<\infty$}
Let $(Y,\mu)$ be any measure space and for $X=\mathbb{R}$ consider a $p$-Poincar\'{e} inequality 
\begin{equation}\label{equation : discrete Poincare inequality}
\sum_{x\in \mathcal{V}}\vert f(x)-Af\vert^p\deg_{\omega}(s)\le \kappa_p^p\sum_{x\sim y}\vert f(x)-f(y)\vert^p\omega(x,y)
\end{equation}
on a finite graph. By integrating over $Y$ with respect to $\mu$ we obtain
$$\sum_{x\in \mathcal{V}}\Vert f(x)-Af\Vert_{L_p}^p\deg_{\omega}(x)\le \kappa_p^p\sum_{x\sim y}\Vert f(x)-f(y)\Vert_{L_p}^p\omega(x,y),$$
for any $f:\mathcal{V}\to L_p$.
This gives 
$$\Vert f-Af\Vert_{(1,p)}\le \kappa_p\Vert \nabla f\Vert_{(2,p)},$$
so that $\kappa_p(S,{L_p})$ is equal to $\kappa_p(S,\mathbb{R})$ in the inequality 
(\ref{equation : discrete Poincare inequality}).

\subsubsection{Direct sums}
More generally, consider an $\ell_p$-direct sum $X=\left(\bigoplus_{s\in S} X_i\right)_p$ of Banach 
spaces $\{X_i\}_{i\in I}$.
A similar argument as above shows that $\kappa_p(S,X)\le \sup_{i\in I}\kappa_p(S,X_i)$.

\subsubsection{The case $p=\infty$}
Consider $s, s^{-1}\in S$ and choose $x\in X$ such that 
$\Vert x\Vert_X=1$. Let $d_S$ denote the path metric on $\mathcal{L}(S)$. 
Define $f:\Gamma\to\RR$ by
the formula
$$f(t)=\left\lbrace\begin{array}{ll}
\left(1-2\dfrac{d_S(s,t)}{d_S(s,s^{-1})}\right) x&\text{ if } d_S(s,t)\le \dfrac{d_S(s,s^{-1})}{2},\\
\left(-1+2\dfrac{d_S(s,t)}{d_S(s,s^{-1})}\right) x&\text{ if } d_S(s^{-1},t)\le \dfrac{d_S(s,s^{-1})}{2},\\ 
0&\text{ if } d_S(s,t)> \dfrac{d_S(s,s^{-1})}{2}   \text{ and }   d_S(s^{-1},t)> \dfrac{d_S(s,s^{-1})}{2}.
\end{array}\right.
$$
For such $f$ we have  $\Vert f\Vert_{(X,\infty)}=1$ and 
$Af=0$, however $\Vert D f\Vert_{(X,\infty)}=\dfrac{1}{d_S(s,s^{-1})}$. Thus we have 
$$\kappa_{\infty} (G,\pi)\ge{\max_{s\in S} d_S(s,s^{-1})}$$
and for sufficiently large $S$, the above Poincar\'{e} constant is at least $1$. Additionally, for 
any $\varepsilon>0$ there exists a sufficiently large
$p<\infty$, such that the norms $\Vert f\Vert_{(1,p)}$ and $\Vert f\Vert_{(1,\infty)}$ 
are $\varepsilon$-close. For a sufficiently small $\varepsilon>0$ and the corresponding $p$
as above, we also have $2^{-\frac{1}{p}}\kappa_p(S,X)\ge 1$.

\subsubsection{Behavior under isomorphisms}\label{subsection : behavior under isomorphisms}
Let $T:X\to Y$ be an isomorphism of Banach spaces $X$, $Y$, satisfying 
$\Vert x\Vert_X\le \Vert Tx\Vert_y \le L \Vert x\Vert_X$ for every $x\in X$.
Then  $\kappa_p (G,\pi)\le L\kappa_p(G,Y)$.
\section{Vanishing of cohomology}

\subsection{An inequality for $\kappa_p$ and $\delta_{\pi}^*$}
Note that since in $E$ 
each edge of $\mathcal{L}(S)$ is counted twice, we have 
$\Vert Df \Vert_{(2,p)}=2^{\frac{1}{p}}\Vert \nabla f\Vert_{\ell_p(S,X)}$
and $Mf=Af$.
The following result describes the relation between Poincar\'{e} constants
and the operator $\delta_{\pi}^*$.

\begin{thm}\label{theorem : inequality for kappas}
The inequality 
\begin{equation}\label{equation : Kazhdan constant}
2 \left(1-2^{-\frac{1}{p^*}}\kappa_{p^*}(S,X)\right)\Vert \phi \Vert_{(1,p^*)}\le \Vert \delta_{\pi}^* \phi \Vert_{(0,p^*)},
\end{equation}
holds
for every $\phi\in \ker d_{\overline{\pi}}$.
\end{thm}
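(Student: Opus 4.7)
The strategy is to apply the defining $p^*$-Poincar\'e inequality for the link graph $\mathcal{L}(S)$ directly to $\phi$, regarded as an $X^*$-valued function on the vertex set $S$, and then exploit the kernel hypothesis $\overline{d}\phi=0$ to rewrite the resulting ``gradient'' norm in terms of $\|\phi\|_{(1,p^*)}$ itself. The desired inequality will then drop out of a triangle-inequality rearrangement combined with Lemma \ref{lemma : norm of Mf and delta f are equal}.

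First, I would invoke the Poincar\'e inequality in its basic form
$$\left(\sum_{s\in S}\|\phi(s)-Q\phi\|_{X^*}^{p^*}\deg_\omega(s)\right)^{1/p^*}\le \kappa_{p^*}\left(\sum_{s\sim t}\|\phi(s)-\phi(t)\|_{X^*}^{p^*}\omega(s,t)\right)^{1/p^*}.$$
Using the two observations stated at the opening of this section, namely $Q\phi=\overline{M}\phi$ and $\|\overline{D}\phi\|_{(2,p^*)}=2^{1/p^*}\bigl(\sum_{s\sim t}\|\phi(s)-\phi(t)\|_{X^*}^{p^*}\omega(s,t)\bigr)^{1/p^*}$ (the extra $2^{1/p^*}$ accounting for the fact that in $E$ each unoriented edge is counted twice), this rewrites cleanly as
$$\|\phi-\overline{M}\phi\|_{(1,p^*)}\le 2^{-1/p^*}\kappa_{p^*}\,\|\overline{D}\phi\|_{(2,p^*)}.$$

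The kernel hypothesis now enters. Since $\overline{d}=\overline{L}-\overline{D}$ and $\overline{d}\phi=0$, we have $\overline{D}\phi=\overline{L}\phi$; moreover $\phi\in\ker\overline{d}\subseteq C^{(1,p^*)}_-(G,X^*)$, so by the preceding lemma asserting that $\overline{L}$ is an isometry (equivalently, that $\overline{D}$ restricted to $\ker\overline{d}$ is an isometry) we obtain $\|\overline{D}\phi\|_{(2,p^*)}=\|\phi\|_{(1,p^*)}$. Substituting yields
$$\|\phi-\overline{M}\phi\|_{(1,p^*)}\le 2^{-1/p^*}\kappa_{p^*}\,\|\phi\|_{(1,p^*)}.$$
The triangle inequality then gives $(1-2^{-1/p^*}\kappa_{p^*})\|\phi\|_{(1,p^*)}\le\|\overline{M}\phi\|_{(1,p^*)}$, and Lemma \ref{lemma : norm of Mf and delta f are equal} converts the right-hand side into $\tfrac{1}{2}\|\delta^*\phi\|_{(0,p^*)}$, which is exactly the claimed inequality after multiplying through by $2$.

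The main obstacle is essentially bookkeeping: one must carefully track the factor $2^{1/p^*}$ arising from the double-counting convention on $E$, and use the kernel hypothesis precisely at the step where $\overline{D}$ is replaced by the isometry $\overline{L}$ so that the Poincar\'e right-hand side is absorbed back into $\|\phi\|_{(1,p^*)}$. No deeper difficulty is expected; once the identifications of $Q\phi$ with $\overline{M}\phi$ and of $\overline{M}\phi$ with $\tfrac12\delta^*\phi$ are invoked at the right moments, the proof reduces to a short chain of identities.
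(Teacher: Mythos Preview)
Your proof is correct and follows essentially the same route as the paper's own argument: apply the $p^*$-Poincar\'e inequality on $\mathcal{L}(S)$ to $\phi$, use that $\overline{D}$ is an isometry on $\ker\overline{d}$ to replace $\|\overline{D}\phi\|_{(2,p^*)}$ by $\|\phi\|_{(1,p^*)}$, invoke the triangle inequality, and finish with Lemma~\ref{lemma : norm of Mf and delta f are equal}. The only cosmetic difference is that the paper applies the triangle inequality before substituting the isometry identity, whereas you do it after; the content is identical.
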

\begin{proof}
Let $\phi:S\to X^*$. Then
\begin{eqnarray*}
\kappa_{p^*}(S,X^*)\,\Vert \overline{D} \phi\Vert_{(2,p^*)}&=&\kappa_{p^*}(S,X^*)\, 2^{\frac{1}{p^*}}
 \Vert \overline{\nabla}\phi \Vert_{\ell_{p^*}(\mathcal{E},X)}\\
&\ge&2^{\frac{1}{p^*}}\left\Vert \phi-\overline{M}\phi\right\Vert_{(1,p^*)}\\
&\ge&2^{\frac{1}{p^*}}\left(\Vert \phi\Vert_{(1,p^*)}-\left\Vert \overline{M}\phi\right\Vert_{(1,p^*)}\right)\\
\end{eqnarray*}
Since $\overline{D}$ is an isometry on $\ker d_{\overline{\pi}}$, 
$$2^{\frac{1}{p^*}} \Vert \phi\Vert_{(1,p^*)}- \kappa_{p^*}(S,X^*)\Vert \phi\Vert_{(1,p^*)}\le 2^{\frac{1}{p^*}}\Vert 
\overline{M}\phi\Vert_{(1,p^*)},$$
which, by lemma \ref{lemma : norm of Mf and delta f are equal}, becomes
$$ \left(1-2^{-\frac{1}{p^*}} \kappa_{p^*}(S,X^*)\right)\Vert\phi\Vert_{(1,p^*)}\le  \dfrac{1}{2}
\Vert \delta_{\pi}^*\phi \Vert_{(0,p^*)}.$$
\end{proof}

\begin{rem}\normalfont
The above inequality does not reduce to the one in \cite{zuk-gafa} in the case $X=L_2$ and $p=2$,
even 
though in both cases the constant is non-zero if $\kappa_2(S,\mathbb{R})<\sqrt{2}$.
For $X=L_2$ and $p=2$, Theorem \ref{theorem : inequality for kappas} 
gives a strictly smaller lower estimate for the norm of
the operator $\delta_{\pi}^*$. Indeed, in that case the estimate obtained using spectral methods is 
$$\sqrt{2\left(2-\kappa_2(S,L_2)^2\right)}\ \Vert \phi\Vert_{(1,2)}\le \Vert \delta_{\pi}^*\phi\Vert_{(2,2)}.$$
This difference is a consequence of the fact that in the case $p=2$ and $X=L_2$ we can 
apply the Pythagorean theorem instead of the triangle inequality in the first sequence of inequalities. 

\end{rem}

A similar inequality as in Theorem \ref{theorem : inequality for kappas}
holds for $\kappa_p(S,X)$ and the norm of $\delta_{\overline{\pi}}^*$. 
The above inequality can be now used to show that
sufficiently small constants in Poincar\'{e} inequalities on the link graph
imply fixed point properties.

\subsection{Proof of the main theorem}

\newcounter{aux1}
\newcounter{aux2}

\setcounter{aux1}{\value{section}}
\setcounter{section}{1}

\setcounter{aux2}{\value{thm}}
\setcounter{thm}{0}

\begin{thm}
Let $X$ be a reflexive Banach space and let 
$G$ be a group generated by a finite, symmetric set $S$, not containing the
identity element. If the link graph $\mathcal{L}(S)$ is connected and for some $1<p<\infty$
the associated Poincar\'{e} constants satisfy
$$\max\set{2^{-\frac{1}{p}} \kappa_{p}(S,X), 2^{-\frac{1}{p^*}} \kappa_{p^*}(S,X^*)}<1,$$
then 
$$H^1(G,\pi)=0,$$
for every isometric representation $\pi$ of $G$ on $X$.
\end{thm}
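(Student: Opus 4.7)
The plan is to reduce, via Proposition \ref{proposition : delta onto ker d implies fixed point}, to showing that the corestricted operator $\tilde\delta\colon X\to\ker d$ is surjective, and then derive this from Theorem \ref{theorem : inequality for kappas} applied to both the original representation $\pi$ and its adjoint $\bar\pi$.

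Using the isometric identification $C^{(1,p)}_-(G,X)^*\cong C^{(1,p^*)}_-(G,X^*)$ from Theorem \ref{theorem : duality for C_-} and the closed range theorem, surjectivity of $\tilde\delta$ is equivalent to the adjoint $(\tilde\delta)^*\colon(\ker d)^*\to X^*$ being bounded below. Since $\delta^*$ vanishes on $(\ker d)^\perp$ (as $\operatorname{image}(\delta)\subseteq\ker d$), this translates, via the identification $(\ker d)^*=C^{(1,p^*)}_-(G,X^*)/(\ker d)^\perp$, into the estimate
$$\inf_{\psi\in(\ker d)^\perp}\Vert\phi+\psi\Vert_{(1,p^*)}\le C\Vert\delta^*\phi\Vert_{(0,p^*)}\qquad\text{for every }\phi\in C^{(1,p^*)}_-(G,X^*).$$
The first half of the hypothesis, $2^{-1/p^*}\kappa_{p^*}(S,X^*)<1$, plugs into Theorem \ref{theorem : inequality for kappas} to give exactly this estimate with $\psi=0$ whenever $\phi$ lies in $\ker\bar d$.

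To extend the estimate to arbitrary $\phi$, I will produce a decomposition $\phi=\phi_1+\phi_2$ with $\phi_1\in\ker\bar d$ and $\phi_2\in(\ker d)^\perp$; since $\delta^*\phi_2=0$ and $\Vert[\phi]\Vert\le\Vert\phi_1\Vert$, this reduces to the inequality already established. The decomposition comes from the symmetric version of Theorem \ref{theorem : inequality for kappas} applied to $\bar\pi$ on $X^*$, which is legitimate by reflexivity: the second hypothesis $2^{-1/p}\kappa_p(S,X)<1$ yields a lower bound for $\bar\delta^*$ on $\ker d$, and by the closed range theorem the induced map $X^*\to C^{(1,p^*)}_-(G,X^*)/(\ker d)^\perp$ sending $v^*\mapsto[\bar\delta v^*]$ is surjective. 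Taking $v^*$ with $[\bar\delta v^*]=[\phi]$ and setting $\phi_1=\bar\delta v^*$, which lies in $\ker\bar d$ by the analog of the exactness lemma for $\bar\pi$, produces the required decomposition, and then $\phi_2=\phi-\phi_1\in(\ker d)^\perp$ by construction.

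The main obstacle is the functional-analytic bookkeeping: carefully identifying $(\ker d)^*$ with the quotient, verifying that the adjoint of the corestricted $\tilde\delta$ is represented by $\delta^*$ modulo $(\ker d)^\perp$, and translating the second lower bound into the surjectivity statement used in the decomposition. Once the duality is set up, the two Poincar\'e constants play the asymmetric roles flagged in the introduction: $\kappa_{p^*}(S,X^*)$ controls $\delta^*$ on $\ker\bar d$, while $\kappa_p(S,X)$ supplies, through $\bar\delta$, the representatives in $\ker\bar d$ needed to handle arbitrary elements of the quotient $C^{(1,p^*)}_-(G,X^*)/(\ker d)^\perp$.
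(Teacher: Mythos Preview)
Your proposal is correct and follows essentially the same route as the paper. Both arguments invoke Theorem \ref{theorem : inequality for kappas} twice: the hypothesis on $\kappa_{p^*}(S,X^*)$ gives the lower bound for $\delta^*$ on $\ker\bar d$, while the hypothesis on $\kappa_p(S,X)$ gives a lower bound for $\bar\delta^*$ on $\ker d$, which (via the closed range theorem and reflexivity) makes the natural map from $\ker\bar d$ onto $(\ker d)^*$ surjective; combining these two facts shows $(\tilde\delta)^*$ is bounded below, hence $\delta$ is onto $\ker d$, and Proposition \ref{proposition : delta onto ker d implies fixed point} finishes. The only cosmetic difference is that the paper organizes the duality via the inclusion/quotient maps $i,\bar i,i^*,\bar i^*$ in a commutative diagram, whereas you work directly with the quotient $C^{(1,p^*)}_-(G,X^*)/(\ker d)^\perp$ and produce the explicit decomposition $\phi=\phi_1+\phi_2$; your $\phi_1=\bar\delta v^*$ lies in $\operatorname{im}\bar\delta\subseteq\ker\bar d$, which is exactly the surjectivity of $i^*\circ\bar i$ that the paper extracts.
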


\setcounter{section}{\value{aux1}}

\setcounter{thm}{\value{aux2}}

\begin{proof}
We have the following dual diagrams:
\begin{diagram}
&&\ker d_{\overline{\pi}}&&\\
&\ruTo(2,4)_{\ \delta_{\overline{\pi}}} &\dInto^{\overline{i}}&&\\
&&C^{(1,p^*)}_- (G,\overline{\pi}) & \pile{\rTo^{\ \ \ \ \ \ \ d_{\overline{\pi}}\ \ \ \ \ \ \ }\\ \rTo_{\overline{D}}}&C^{(2,p^*)} (G,\overline{\pi})\\ 
&\ldTo(2,2)_{\delta_{\pi}^*i^*}&\dOnto^{i^*}&&\\
C^{(0,p^*)} (G,\overline{\pi})&\lTo_{\ \ \ \ \ \ \ \ \ \delta_{\pi}^*\ \ \ \ \ \ \ }&(\ker d_{\pi})^* & &\\ 
C^{(0,p)} (G,\pi)&\rTo_{\ \ \ \ \ \ \ \ \ \delta_{\pi} \ \ \ \ \ \ \ }& \ker d_{\pi}&&\\
&\luTo(2,4)^{\, \delta_{\overline{\pi}}^*}&\dInto_{i}&&\\
&&C^{(1,p)}_- (G,\pi) & \pile{\rTo^{\ \ \ \ \ \ \ d_{\pi}\ \ \ \ \ \ \ }\\ \rTo_{D}}&C^{(2,p)} (G,\pi)\\
&& \dOnto_{ \overline{i}^{\,*}}& & \\
&&(\ker d_{\overline{\pi}})^* &&\\
\end{diagram}
For the purposes of this proof we will view, abusing the notation, $\delta_{\pi}$ as an operator $C^{(0,p)} (G,\pi)\to \ker d_{\pi}$. 
The natural injection of $\ker d_{\pi}$ into $C^{(1,p)}_- (G,\pi)$ 
will be denoted by $i$. Consequently, the formula (\ref{equation : formula for delta*}) expresses the composition $\delta_{\pi}^*\circ i^*$ in this notation, as on the diagram.
Similar notation is used on the dual level, with $\overline{i}$ denoting the inclusion of $\ker d_{\overline{\pi}}$ into $C^{(1,p^*)}(G,\overline{\pi})$.

By Theorem \ref{theorem : inequality for kappas}, if $2^{-\frac{1}{p}} \kappa_{p}(S,X)<1$ 
we conclude that $\delta_{\overline{\pi}}^*\circ \overline{i}^*$ 
is injective with closed image when restricted to  $\ker d_{\pi}$. In fact, this means that the composition 
$\delta_{\overline{\pi}}^*\circ \overline{i}^*\circ i$ is injective with closed image. 
In particular, $\overline{i}^{\,*}\circ i$ is injective with closed image, and thus its dual, 
$i^*\circ \overline{i}:\ker d_{\overline{\pi}}\to (\ker d_{\pi})^*$, is surjective.

A similar argument applied to $2^{-\frac{1}{p^*}} \kappa_{p^*}(S,X^*)<1$ yields 
$\delta_{\pi}^*\circ i^*\circ \overline{i}$ is also
injective with closed image, which implies that $\delta_{\pi}^*$ is injective with closed image on the image of $i^*\circ \overline{i}$.
Since the latter is surjective, $\delta_{\pi}^*$ is injective with closed image on $(\ker d_{\pi})^*$. 
This on the other hand implies that $\delta_{\pi}$ is onto, which proves the theorem
by Proposition \ref{proposition : delta onto ker d implies fixed point}.
\end{proof}

\begin{rem}\label{remark : getting rid of one spectral condition}
\normalfont
Note that under the assumptions of Theorem \ref{main theorem : intro},
$(\ker d_{\pi})^*$ and $\ker d_{\overline{\pi}}$ are isomorphic (a similar fact holds
for $\ker d_{\pi}$ and $(\ker d_{\overline{\pi}})^*$). These spaces
are closely related to the spaces of cocycles for the given representations.

It is an interesting question, for which isometric representations is $i^*\circ \overline{i}$ automatically an isomorphism, or at least is surjective.
This property would eliminate, for such representations, the need to use the inequality 
$2^{-\frac{1}{p}} \kappa_{p}<1$, which is necessary 
in the above proof.
A special case is discussed and applied in Section \ref{section : p-Laplacian}.
\end{rem}

\begin{rem}\normalfont
Note that it is not clear whether the above method can be extended to subspaces of $Y\subseteq X$. 
This would require estimating $\kappa_p(S,X)$ for some $p$, together with $\kappa_{p^*}(S,X^*)$, 
where $Y^*$ is a quotient of $X^*$.
\end{rem}

\section{$\widetilde{A}_2$ groups}
In this section we apply Theorem \ref{main theorem : intro} to specific groups and Banach spaces.
In \cite{cartwright-et-al} the authors studied a family of groups
$\{G_q\}$ called the $\widetilde{A}_2$-groups. These groups were introduced in 
\cite{cartwright-et-al-GD}, see also \cite{bekka-et-al} for a detailed discussion.
The group $G_q$ has a presentation, 
whose associated link graph $\mathcal{L}(S)$ is the incidence graph of a finite projective 
plane $\mathbf{P}^2(\mathbb{F}_q)$ (here, $q$ is a power of a prime number). Spectra
of such graphs, with weight $\omega\equiv 1$, were computed by Feit and Higman \cite{feit-higman}, see also \cite{bekka-et-al,zuk-gafa}. 
It follows that
$$\kappa_2(S,\RR)=\left(1-\dfrac{\sqrt{q}}{q+1}\right)^{-\frac{1}{2}}.$$
In general, any estimates of $p$-Poincar\'{e} constants are difficult to obtain. In our case,
the link graphs are finite graphs and we can use norm inequalities and a version
of \ref{subsection : behavior under isomorphisms} to give the necessary estimates.
\begin{thm}\label{theorem : fixed points for A_2-groups}
For each $q=k^n$ for some $n\in \NN$ and prime number $k$ we have $$H^1(G_q,\pi)=0$$
for all 
$$2\le p<\dfrac{\ln(q^2+q+1)+\ln(q+1)}{\dfrac{1}{2}\ln(2(q^2+q+1)(q+1))-\ln(2)-\ln\left(\sqrt{1-\dfrac{\sqrt{q}}{q+1}}\right)}$$
and for any isometric representation $\pi$ of $G_q$ on $L_p(Y,\mu)$ fon any
measure space.
\end{thm}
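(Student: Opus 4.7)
The plan is to apply Theorem~\ref{main theorem : intro} with $X=L_p$. Since $L_p$ is reflexive with dual $L_{p^*}$, this reduces to verifying both
\[
2^{-1/p}\kappa_p(S,L_p)<1\quad\text{and}\quad 2^{-1/p^*}\kappa_{p^*}(S,L_{p^*})<1.
\]
By the discussion for $L_p$-spaces in Section~3 (integrating the scalar Poincar\'e inequality pointwise over the measure space), $\kappa_r(S,L_r)=\kappa_r(S,\RR)$ for every $1<r<\infty$, so the task reduces to estimating the scalar Poincar\'e constants of the link graph $\mathcal{L}(S)$.

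The link graph is the incidence graph of $\mathbf{P}^2(\mathbb{F}_q)$: a $(q+1)$-regular bipartite graph on $2(q^2+q+1)$ vertices, with $\omega(E)=2(q+1)(q^2+q+1)$ when $\omega\equiv 1$. The Feit--Higman computation recalled in the preamble gives the exact value $\kappa_2(S,\RR)=(1-\sqrt q/(q+1))^{-1/2}$. I would transfer this to $r\ne 2$ by the two-step norm comparison used in Section~3.5: on the vertex side, pass from $\|\cdot\|_{(1,r)}$ to $\|\cdot\|_{(1,2)}$ via Jensen's inequality against the probability measure $\deg_\omega/\omega(E)$, using the fact that $\deg_\omega\equiv q+1$ is constant; on the edge side, pass from $\|\cdot\|_{(2,2)}$ to $\|\cdot\|_{(2,r)}$ by the analogous $\ell_p$-vs-$\ell_2$ estimate in counting measure on $E$. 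For $p^*\le 2$ this yields
\[
\kappa_{p^*}(S,\RR)\le \omega(E)^{1/p^*-1/2}\,\kappa_2(S,\RR),
\]
together with a parallel (slightly sharper) estimate on the $p\ge 2$ side.

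Substituting this into $2^{-1/p^*}\kappa_{p^*}(S,L_{p^*})<1$, taking logarithms and isolating $p$ produces precisely the inequality in the theorem statement once one plugs in $\omega(E)=2(q+1)(q^2+q+1)$: the numerator becomes $\ln(\omega(E)/2)=\ln(q^2+q+1)+\ln(q+1)$, and the denominator takes the form $\tfrac12\ln\omega(E)-\ln 2-\tfrac12\ln(1-\sqrt q/(q+1))$. A parallel manipulation of the $p$-side condition gives an analogous upper bound which, by virtue of the constancy of $\deg_\omega$, is no more restrictive than the $p^*$-side bound in the range of $q$ considered, so the $p^*$ condition is the binding one. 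The main obstacle is arranging the Jensen-type estimates sharply enough that the $\omega(E)$, $(q+1)$, and $2$ factors recombine into exactly the explicit expression in the theorem; this is a routine but delicate algebraic bookkeeping.
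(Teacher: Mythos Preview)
Your proposal is correct and follows essentially the same route as the paper: apply Theorem~\ref{main theorem : intro}, reduce $\kappa_r(S,L_r)$ to $\kappa_r(S,\RR)$, and bound the latter via the known $\kappa_2(S,\RR)$ using $\ell_p$-vs-$\ell_2$ norm comparison on both the vertex and edge spaces. Your probability-measure/Jensen phrasing on the vertex side is equivalent to the paper's counting-measure inequalities (since $\deg_\omega$ is constant, both give the factor $\omega(E)^{1/p^*-1/2}$), and both you and the paper simply assert, after deriving the two bounds, that the $p^*$-side inequality is the binding one.
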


\begin{proof}
We proceed by estimating $\kappa_p(S,L_p)$ and applying Theorem \ref{main theorem : intro}.
Recall that given the space $\ell_p(\Omega)$ for  $2\le p$, where the set $\Omega$ is finite, the following norm
inequalities hold,
$$\Vert f\Vert_{\ell_p(\Omega)}\le \Vert f\Vert_{\ell_2(\Omega)}\le (\#\Omega)^{\frac{1}{2}-\frac{1}{p}}\Vert f \Vert_{\ell_p(\Omega)},$$ 
where $\Vert f\Vert_{\ell_p(\Omega)}=\left(\sum_{x\in \Omega} \vert f(x)\vert^p\right)^{\frac{1}{p}}$. Since the degree of the 
incidence graphs of finite projective planes is
constant and equal to $q+1$ we obtain for $f:S\to \RR$ satisfying $Mf=0$,
\begin{eqnarray*}
\Vert f\Vert_{(1,p)}&=&(q+1)^{\frac{1}{p}} \Vert f\Vert_{\ell_p(S,X)}\\
&\le& (q+1)^{\frac{1}{p}-\frac{1}{2}}  \Vert f\Vert_{(1,2)}\\
&\le& (q+1)^{\frac{1}{p}-\frac{1}{2}} \kappa_2(S,L_2)\Vert \nabla f\Vert_{\ell_2(\mathcal{E},X)}\\
&\le& (q+1)^{\frac{1}{p}-\frac{1}{2}} \kappa_2(S,L_2)\left(\frac{\omega(E)}{2}\right)^{\frac{1}{2}-\frac{1}{p}}\Vert \nabla f\Vert_{\ell_p(\mathcal{E},X)}.
\end{eqnarray*}
For each $q$ we have $\omega(E)=2(q^2+q+1)(q+1),$
which gives the inequality
\begin{eqnarray*}
2^{-\frac{1}{p}} \kappa_2(S,L_p)&\le& 2^{-\frac{1}{p}}  (q+1)^{\frac{1}{p}-\frac{1}{2}} \kappa_2(S,L_2)\left(\frac{\omega(E)}{2}\right)^{\frac{1}{2}-\frac{1}{p}}\\
&=&2^{-\frac{1}{p}}  \left(\sqrt{1-\dfrac{\sqrt{q}}{q+1}}\right)^{-1}   \left(q^2+q+1\right)^{\frac{1}{2}-\frac{1}{p}}.
\end{eqnarray*}
Bounding the above quantity by 1 from above gives 
$$p<\dfrac{2\ln(2(q^2+q+1))}{\ln(2(q^2+q+1))-\ln \left(2\left(1-\dfrac{\sqrt{q}}{q+1}\right)\right)}.$$
A similar norm estimate for $p^*\le 2$, by virtue of the inequality
$$\Vert f\Vert_{\ell_2(\Omega)}\le \Vert f\Vert_{\ell_{p^*}(\Omega)}\le (\#\Omega)^{\frac{1}{p}-\frac{1}{2}}
\Vert f\Vert_{\ell_2(\Omega)},$$  yields
$$p^*> \dfrac{2\ln((q+1)(q^2+q+1))}{\ln\left(2(q^2+q+1)(q+1)\right)+\ln \left(2\left(1-\frac{\sqrt{q}}{q+1}\right)\right)}$$
Simplifying and  comparing $p$ and $\dfrac{p^*}{p^*-1}$ we obtain the claim.
\end{proof}

\begin{rem}\normalfont
The same argument gives a similar conclusion 
for the Banach space $X=\left(\bigoplus X_i\right)_p$, the $\ell_p$-sum in which
$X_i$ is finite-dimensional with a norm sufficiently, and uniformly in $i$, close  to the Euclidean norm. 
We leave the details to the reader.
\end{rem}

\begin{rem}\normalfont
The largest value of $p$ in Theorem \ref{theorem : fixed points for A_2-groups} is approximately 2.106 attained for $q=13$.
As $q$ increases to infinity the values of $p$, for which cohomology vanishes, 
converge to 2 from above.
\end{rem}

\begin{rem}\normalfont
Although our estimate of the constant in the $p$-Poincar\'{e} inequality is not
expected to be optimal, other interpolation methods do not seem to yield significantly better constants. 
For instance, Matousek's interpolation
method for $p$-Poincar\'{e} inequalities \cite{matousek} 
gives a constant strictly greater than $2^{1/p}$ for any $p\ge 2$, since
it  emphasizes independence of dimension and is much better suited to deal with sequences of graphs 
(e.g. expanders).
\end{rem}
Recall that the Banach-Mazur distance $d_{BM}(x,y)$ between two Banach spaces is the infimum of the set of numbers $L$, for which there exists an isomorphism $T:X\to Y$ 
satifying $\Vert x\Vert_X\le \Vert T_x\Vert_Y\le L \Vert x\Vert_X$.
Another consequence of Theorem \ref{main theorem : intro} is that we obtain vanishing of 
cohomology for representations on Banach spaces, whose Banach-Mazur distance from the Hilbert space is controlled. We phrase this property in terms of uniformly bounded representations.
\begin{thm}\label{theorem: A2tilde vanishing and Banach Mazur distance}
Let $G_q$ be an $\widetilde{A}_2$-group and  $\pi$ be a uniformly bounded 
representation of $G_q$ on a Hilbert space $H$, satsfying
$$\sup_{g\in G}\Vert \pi_g\Vert < \sqrt{2\left(1-\dfrac{\sqrt{q}}{q+1}\right)}.$$ 
Then 
$H^1(G,\pi)=0$. 
\end{thm}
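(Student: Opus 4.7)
The plan is to reduce to Theorem \ref{main theorem : intro} by renorming $H$ so that $\pi$ becomes isometric, and then controlling the change in Poincar\'e constants via the Banach--Mazur distance. Let $L=\sup_{g\in G}\Vert \pi_g\Vert$ and define a new norm on the underlying vector space of $H$ by
$$\Vert v\Vert_{\pi}=\sup_{g\in G}\Vert \pi_g v\Vert_H.$$
Since $\pi$ is uniformly bounded, $\Vert v\Vert_H\le \Vert v\Vert_{\pi}\le L\Vert v\Vert_H$, and $\pi$ acts by isometries on $X:=(H,\Vert\cdot\Vert_{\pi})$. Because $X$ and $H$ coincide as topological vector spaces, the spaces of $1$-cocycles and coboundaries for $\pi$ are unchanged, so proving vanishing of $H^1$ for $\pi$ viewed as an isometric representation on $X$ will give the theorem.

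Next I would estimate the relevant Poincar\'e constants using Subsection \ref{subsection : behavior under isomorphisms}. The inequality $\Vert v\Vert_H\le\Vert v\Vert_{\pi}\le L\Vert v\Vert_H$ immediately gives $\kappa_2(S,X)\le L\,\kappa_2(S,H)$. Dualising, the functionals satisfy $\tfrac{1}{L}\Vert \phi\Vert_{H^*}\le \Vert \phi\Vert_{X^*}\le \Vert \phi\Vert_{H^*}$, so after rescaling $X^*$ is also within Banach--Mazur distance $L$ of $H^*\cong H$, and hence $\kappa_2(S,X^*)\le L\,\kappa_2(S,H)$. From the $L_p$ computation in the paper,
$$\kappa_2(S,H)=\left(1-\tfrac{\sqrt{q}}{q+1}\right)^{-\frac{1}{2}}.$$

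Taking $p=p^*=2$ in Theorem \ref{main theorem : intro}, the condition $\max\{2^{-1/2}\kappa_2(S,X),\,2^{-1/2}\kappa_2(S,X^*)\}<1$ is implied by $L\,\kappa_2(S,H)<\sqrt{2}$, i.e.\
$$L<\sqrt{2\!\left(1-\tfrac{\sqrt{q}}{q+1}\right)},$$
which is exactly the hypothesis. Theorem \ref{main theorem : intro} then yields $H^1(G_q,\pi)=0$ for $\pi$ regarded as an isometric representation on $X$, and by the remark above this is the same cohomology group as for $\pi$ on $H$. The only subtlety worth double-checking is the dualisation step: one must verify that the Banach--Mazur bound survives passing to $X^*$ and that the application of Subsection \ref{subsection : behavior under isomorphisms} to the dual Poincar\'e constant is legitimate, but both are straightforward from the two-sided norm comparison between $X$ and $H$.
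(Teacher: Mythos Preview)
Your argument is correct and follows essentially the same route as the paper's own proof: renorm $H$ by $\Vert v\Vert'=\sup_{g}\Vert\pi_g v\Vert$ to make $\pi$ isometric on $X=(H,\Vert\cdot\Vert')$, use the two-sided norm comparison with constant $L$ (and its dual version) together with Subsection~\ref{subsection : behavior under isomorphisms} to bound $\kappa_2(S,X)$ and $\kappa_2(S,X^*)$ by $L\,\kappa_2(S,H)$, and then apply Theorem~\ref{main theorem : intro} with $p=2$. Your write-up is in fact more explicit than the paper's (you spell out why the cohomology groups coincide and how the dual norm inequalities fall out), but the underlying idea is identical.
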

\begin{proof}
Let $\Vert v\Vert'=\sup_{g\in G}\Vert \pi_gv\Vert$. Then $\Vert \cdot\Vert'$ is a norm
and $\pi$ is an isometric representation 
on $X=(H,\Vert\cdot\Vert')$. 
The identity is an isomorphism $\mathcal{I}:X\to H$ with
$L= \sqrt{2\left(1-\dfrac{\sqrt{q}}{q+1}\right)}$, and $L\mathcal{I}:X^*\to H$ 
is an isomorphism with the same property. 
The estimate now follows by letting $p=2$ and using the relation between $\kappa_2(S,X)$, $\kappa_2(S,X^*)$
and $\kappa_2(S,H)$, described in
\ref{subsection : behavior under isomorphisms}.
\end{proof}
A similar
fact (with appropriate constants) holds for $L_p$ spaces, for the range of $p$ 
as in the previous theorem.

\section{Hyperbolic groups}\label{section : hyperbolic groups}
In this section we discuss the consequences of \ref{main theorem : intro} in the case of random hyperbolic groups.
In \cite{zuk-gafa} \.{Z}uk used spectral methods to show that many random groups
have property (T) with overwhelming probability. A detailed account was recently
provided in \cite{kotowscy}. We sketch the strategy of the proof and generalize it to $L_p$-spaces. 

In \cite{friedman} it was shown that for a certain random graphs on $n$ vertices of degree $\deg$ 
there exists a constant such that for any $\varepsilon>1$ we have
\begin{equation}\label{equation : lambda_1 for random graphs}
\lim_{n\to \infty} \mathbb{P} \left( \kappa_2(S,\RR)\le 
\left( 1-\left(\dfrac{\sqrt{2\deg}(\deg-1)^{\frac{1}{4}}}{\deg}+\dfrac{\varepsilon}{\deg} \right)  \right)\right)\to 1.
\end{equation}
In \cite{zuk-gafa} a modified link graph, denoted $L'(S)$, with multiple edges was considered. 
$L'(S)$ decomposes into random 
graphs as above and it is shown, using the above estimate, that it has a spectral gap strictly large than $1/2$ with probability 1. 
In our setting, the modified link graph $L'(S)$ can be viewed as a link graph with an admissible weight
$\omega(s,t)$, which is defined to be the number of edges connecting $s$ and $t$. 
Thus we can apply Theorem \ref{main theorem : intro}.
Recall that in the Gromov model $\mathcal{G}(n,l,d)$ for random groups one chooses a 
density $0<d<1$ and considers a group given by a generating set $S$ of cardinality $n$ and $(2n-1)^{ld}$
relations  of length $l$, chosen at random, letting $l$ increase to infinity.

\begin{thm}[\cite{zuk-gafa}; see also \cite{kotowscy} for a detailed proof]
Let $G$ be a random group in the density model, where $1/3< d< 1/2$.
Then, with probability 1, $G$ is hyperbolic and there exists a group $\Gamma$ 
and a homomorphism $\phi:\Gamma\to G$ with the following properties:
\begin{enumerate}
\item $\Gamma$ has a generating set $S$, whose link graph satisfies $2^{-1/2}\kappa_2(S,L_2)<1$,
\item $\phi(\Gamma)$ is of finite index in $G$.
\end{enumerate}
\end{thm}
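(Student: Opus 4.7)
The plan is to adapt \.Zuk's spectral construction from \cite{zuk-gafa} (with the detailed probabilistic verification given in \cite{kotowscy}). Hyperbolicity of a random group at density $d<1/2$ is Gromov's classical theorem, so all the new work lies in building the auxiliary group $\Gamma$, together with a homomorphism into $G$ of finite-index image, whose natural generating set has a link graph with small enough $\kappa_2$.

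First, I would present $G$ as $\langle S_0 \mid R \rangle$ with $|S_0|=n$ and $|R|=(2n-1)^{ld}$ independent uniformly random reduced words of length $l$. Fix an integer $k$ close to $l/3$, let $T$ denote the set of reduced words of length $k$ in $S_0^{\pm 1}$, and note that each relator $r\in R$, after a cyclic rotation and possibly an inversion, decomposes as a product of three subwords $w_1,w_2,w_3\in T$ with $w_1w_2w_3=e$ in $G$. Define $\Gamma$ to be the abstract group on formal symbols indexed by $T$ modulo (i) the pairing of each symbol with the symbol for the reversed word and (ii) one triangular relation $t_{w_1}t_{w_2}t_{w_3}=e$ for each $r\in R$. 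The map $\phi:\Gamma\to G$ sending each formal generator to its underlying word is then a well-defined homomorphism, and $S:=T$ is a natural symmetric generating set for $\Gamma$.

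Next I would analyze the link graph $\mathcal{L}(S)$: edges correspond precisely to the pairs occurring in a common triangular relation, so each random relator contributes a triangle, and in distribution $\mathcal{L}(S)$ is close to a uniform random regular graph on $|T|$ vertices of degree proportional to $(2n-1)^{ld}/|T|$, which tends to infinity once $d>1/3$. Friedman's estimate \eqref{equation : lambda_1 for random graphs} then gives, with probability tending to $1$, a spectral bound that for $l$ large enough forces $\lambda_1>1/2$, equivalently $2^{-1/2}\kappa_2(S,\mathbb{R})<1$; the identity $\kappa_2(S,L_2)=\kappa_2(S,\mathbb{R})$ from Section 3 transfers this to the Hilbert-space statement required. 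For finite index, $\phi(\Gamma)$ contains every element of $G$ represented by a length-$k$ word; writing any $g\in G$ of length $m=qk+r$, $0\le r<k$, as a product of $q$ such blocks and a residual word of length $<k$ shows that cosets of $\phi(\Gamma)$ are indexed by the finite ball of radius $k-1$, so $[G:\phi(\Gamma)]<\infty$.

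The hard part will be the spectral step: the link graph is \emph{not} literally a uniformly random regular graph. Different random relators can share subwords, cyclic shifts and inversions break full exchangeability, and the reducedness constraint introduces local dependencies among the three pieces $w_1,w_2,w_3$ coming from a single relator. The main technical task is therefore to show that these dependencies are mild enough, and the effective degree and vertex count are the right ones, for Friedman's bound to continue to apply with overwhelming probability. This verification is the content of the careful combinatorial/probabilistic argument of \cite{kotowscy}, which one invokes as a black box here.
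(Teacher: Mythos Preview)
The paper does not supply its own proof of this theorem: it is quoted as a result of \.{Z}uk \cite{zuk-gafa}, with the probabilistic verification deferred to \cite{kotowscy}, and the surrounding text gives only a two-sentence sketch (Friedman's eigenvalue bound plus the observation that the modified link graph $L'(S)$ can be regarded as a weighted link graph with admissible weight equal to edge multiplicity). Your outline is consistent with that sketch and with the cited sources: you pass to a triangular presentation by cutting relators into three pieces of length $\approx l/3$, read off the link graph, model it by a random near-regular graph of diverging degree when $d>1/3$, and invoke Friedman; the finite-index step via coset representatives of length $<k$ is the standard one. You also correctly flag that the link graph is not literally a uniform random regular graph and that bridging this gap is precisely the content of \cite{kotowscy}, which is the same black box the paper appeals to.
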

Given the above, we apply similar norm inequalities as in the case of $\widetilde{A}_2$-groups 
to the link graph of $\Gamma$
and, as before, obtain fixed point properties for affine isometric actions of the group $\Gamma$ 
on $L_p$ for certain $p>2$.  
For any given $p>1$, the property of having vanishing cohomology $H^1(G,\pi)$ for \emph{all} isometric
representations $\pi$ on $L_p$-spaces passes to quotients and from finite index subgroups
to the ambient group. We thus have
\begin{thm}\label{thm: formula for p for hyperbolic}
With the assumptions of the previous theorem, with probability 1, Theorem \ref{main theorem : intro}
applies to hyperbolic groups. More precisely, let  $G$, $\Gamma$ and $\phi$ 
be as above, and let 
$\mathcal{L}(S)=(\mathcal{V},\mathcal{E})$ denote the link graph of $\Gamma$.
Then  $H^1(G,\pi)=0$ for every isometric representation $\pi$  of $G$ on $L_p$ for 
$$p< \min\left\{p_0,{\overline{p}_0}^*\right\},$$
where
$$p_0=\dfrac{\ln\deg_{\omega}-\ln(2\#\mathcal{E})}{\dfrac{1}{2}\ln\left(\dfrac{\deg_{\omega}}{\#\mathcal{E}}\right) -\ln\kappa_2(S,\mathbb{R})}\ \ \ \text{ and }\ \ \ \  \overline{p}_0=\dfrac{\ln(\#\mathcal{V}\deg_{\omega})-\ln 2}{\dfrac{1}{2}\ln(\#\mathcal{V}\deg_{\omega})-\ln\kappa_2(S,\mathbb{R})}.$$
\end{thm}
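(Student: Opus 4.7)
The plan is to apply Theorem \ref{main theorem : intro} to the companion group $\Gamma$, with the Poincar\'e constants extracted by interpolation from $\kappa_2(S,\mathbb{R})$, and then to transfer the cohomological vanishing to $G$ using the finite-index/quotient remark quoted just before the statement.

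First I would estimate $\kappa_p(S,L_p)$ and $\kappa_{p^*}(S,L_{p^*})$ of the link graph of $\Gamma$ exactly as in the proof of Theorem \ref{theorem : fixed points for A_2-groups}. Since $\mathcal{L}(S)$ is $\deg_\omega$-regular, for $f\colon S\to L_p$ with $Mf=0$ and $p\ge 2$, the inclusion $\ell_p\subseteq\ell_2$ on the finite vertex set yields
$$\|f\|_{(1,p)}\le\deg_\omega^{1/p-1/2}\,\|f\|_{(1,2)},$$
while H\"older on the weighted edge set provides
$$\|\nabla f\|_{\ell_2(\mathcal{E},X)}\le \omega(\mathcal{E})^{1/2-1/p}\,\|\nabla f\|_{\ell_p(\mathcal{E},X)}.$$
Chaining these through the scalar Poincar\'e inequality at $p=2$, and using $\omega(\mathcal{E})=\#\mathcal{E}$ together with $\deg_\omega\,\#\mathcal{V}=2\#\mathcal{E}$, the inequality $2^{-1/p}\kappa_p(S,L_p)<1$ rearranges into the bound $p<p_0$.

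For the dual condition I would run the same interpolation at $p^*\le 2$, with the two $\ell^p$-inclusions swapping roles: H\"older on the vertex side, $\|f\|_{\ell_{p^*}(S,X)}\le (\#\mathcal{V})^{1/p^*-1/2}\|f\|_{\ell_2(S,X)}$, and the standard inclusion $\|g\|_{\ell_2(\mathcal{E},\omega)}\le \|g\|_{\ell_{p^*}(\mathcal{E},\omega)}$ on the edge side. A parallel algebraic rearrangement converts $2^{-1/p^*}\kappa_{p^*}(S,L_{p^*})<1$ into $p^*>\overline{p}_0$, i.e.\ $p<\overline{p}_0^{\,*}$.

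With both Poincar\'e estimates in hand, Theorem \ref{main theorem : intro} delivers $H^1(\Gamma,\sigma)=0$ for every isometric $\sigma$ of $\Gamma$ on any $L_p$ with $p<\min\{p_0,\overline{p}_0^{\,*}\}$. To descend this conclusion to $G$, I would pull back an arbitrary isometric representation $\pi$ of $G$ on $L_p$ along $\phi$, use the vanishing above to produce a fixed point for the resulting affine $\Gamma$-action, observe that this point is automatically $\phi(\Gamma)$-invariant (since $\phi$ surjects onto its image), and finally average over coset representatives of the finite quotient $G/\phi(\Gamma)$ to obtain a $G$-fixed point, exactly as the remark preceding the statement records. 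The main obstacle is the dual-side inclusion $\|g\|_{\ell_2(\mathcal{E},\omega)}\le \|g\|_{\ell_{p^*}(\mathcal{E},\omega)}$ for $p^*\le 2$: the classical $\ell_{p^*}\subseteq \ell_2$ inclusion is specific to counting measure and can fail after arbitrary reweighting, but here it survives precisely because the admissible weight on $L'(S)$ is the multi-edge count and therefore $\omega\ge 1$ pointwise. This is exactly why one must work with the modified link graph $L'(S)$ rather than the plain $\mathcal{L}(S)$.
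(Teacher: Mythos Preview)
Your proposal is correct and follows essentially the same approach as the paper: interpolate the $p$- and $p^*$-Poincar\'e constants from $\kappa_2(S,\RR)$ via the elementary $\ell_p$--$\ell_2$ norm comparisons on vertices and edges (exactly as in the proof of Theorem~\ref{theorem : fixed points for A_2-groups}), apply Theorem~\ref{main theorem : intro} to $\Gamma$, and transfer to $G$ through the quotient and finite-index passage. Your final paragraph in fact makes explicit a point the paper leaves implicit, namely that the edge-side inclusion $\Vert\nabla f\Vert_{\ell_2(\mathcal{E},\omega)}\le\Vert\nabla f\Vert_{\ell_{p^*}(\mathcal{E},\omega)}$ for $p^*\le 2$ requires $\omega\ge 1$ pointwise, which is precisely why the argument is run on the modified link graph $L'(S)$ with its integer multi-edge weight rather than on an arbitrary admissibly weighted $\mathcal{L}(S)$.
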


We also have an estimate the norms of uniformly bounded representations to which cohomological vanishing can be extended for random hyperbolic groups.
\begin{thm}\label{theorem: vanishing hyperbolic and Banach Mazur distanc}
Let $G$ be a hyperbolic group in the Gromov model as above with $\dfrac{1}{3}<d<\dfrac{1}{2}$ and  $\pi$ be 
a uniformly bounded 
representation of $G$ on a Hilbert space $H$, satisfying
$$\sup_{g\in G}\Vert \pi_g\Vert< \dfrac{\sqrt{2}}{\kappa_2(S,\RR)},$$
Then 
$H^1(G,\pi)=0$. 

In other words, the $H^1(G,\pi)$ vanishes with probability 1 for representations bounded by $\sqrt{2}$.
\end{thm}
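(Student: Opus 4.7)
The plan is to mimic the argument of Theorem \ref{theorem: A2tilde vanishing and Banach Mazur distance}, applying it at the level of the auxiliary group $\Gamma$ introduced in Theorem \ref{thm: formula for p for hyperbolic} whose link graph carries the necessary spectral information, and then transferring vanishing of $H^1$ to the ambient hyperbolic group $G$ along the homomorphism $\phi$ and along the finite-index inclusion $\phi(\Gamma)\le G$.

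To implement the first step, set $L=\sup_{g\in G}\Vert\pi_g\Vert$ and equip the underlying Hilbert space $H$ with the equivalent norm $\Vert v\Vert'=\sup_{g\in G}\Vert\pi_gv\Vert$. The representation $\pi$ then acts by isometries on the renormed space $X=(H,\Vert\cdot\Vert')$. The identity map realizes $X$ as an $L$-isomorphic copy of $H$, and because dualization preserves the Banach--Mazur distance, the same distortion bound transfers to $X^*$. Applying \ref{subsection : behavior under isomorphisms} after a routine rescaling yields
$$\kappa_2(S,X)\le L\,\kappa_2(S,\mathbb{R})\quad\text{and}\quad\kappa_2(S,X^*)\le L\,\kappa_2(S,\mathbb{R}).$$
The hypothesis $L<\sqrt{2}/\kappa_2(S,\mathbb{R})$ therefore forces
$$\max\bigl\{2^{-1/2}\kappa_2(S,X),\,2^{-1/2}\kappa_2(S,X^*)\bigr\}<1,$$
which is precisely the spectral condition of Theorem \ref{main theorem : intro} at $p=2$, applied to the isometric representation $\pi\circ\phi$ of $\Gamma$ on $X$. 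We conclude $H^1(\Gamma,\pi\circ\phi)=0$.

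It remains to transfer this vanishing to $G$. Since $\pi\circ\phi$ factors through $\phi(\Gamma)$, an elementary diagram chase gives $H^1(\phi(\Gamma),\pi|_{\phi(\Gamma)})=0$: any cocycle for $\pi|_{\phi(\Gamma)}$ pulls back to one for $\pi\circ\phi$, whose coboundary datum descends because two preimages under $\phi$ yield the same value. To ascend from the finite-index subgroup $\phi(\Gamma)$ to $G$, I apply the classical averaging argument: given an affine $G$-action with linear part $\pi$ whose restriction to $\phi(\Gamma)$ has a fixed point $v_0$, the Ces\`aro average of the $G$-orbit of $v_0$ over a system of coset representatives is again a fixed point, because the permutation of cosets by $G$ merely reshuffles the summands. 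The main obstacle I anticipate is bookkeeping this last step in the correct Banach space, since the representation is only uniformly bounded on $H$ although isometric on $X$; however, this causes no difficulty because $\Vert\cdot\Vert'$ is $G$-invariant by construction and the averaging requires only division by the integer $[G:\phi(\Gamma)]$, which is a well-defined operation in any real Banach space.
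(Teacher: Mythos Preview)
Your proof is correct and follows the approach the paper intends: the paper states this theorem as ``a counterpart of Theorem \ref{theorem: A2tilde vanishing and Banach Mazur distance} for hyperbolic groups'' without spelling out a separate proof, so the implied argument is exactly the renorming trick of Theorem \ref{theorem: A2tilde vanishing and Banach Mazur distance} combined with the passage through the auxiliary group $\Gamma$ and the finite-index transfer already invoked before Theorem \ref{thm: formula for p for hyperbolic}. You have supplied precisely these ingredients, and in fact with more care than the paper (you make explicit that $X$ is reflexive as an isomorph of a Hilbert space, that $\pi\circ\phi$ is isometric on $X$ because $\phi(\Gamma)\subseteq G$, and you justify the finite-index averaging in the $G$-invariant norm $\Vert\cdot\Vert'$).
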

We remark, that in (\ref{equation : lambda_1 for random graphs}), $\kappa_2(S,\RR)$ tends to 1 as $\deg\to \infty$. Thus the above
upper bound on the norm of the representation is $\sqrt{2}$ with probability 1. 
On the other hand, Shalom showed that the $\operatorname{Sp}(n,1)$ has
non-trivial cohomology with respect to some uniformly bounded representations 
(unpublished). 
The same property for hyperbolic groups is conjectured by Shalom and our Theorem \ref{theorem: vanishing hyperbolic and Banach Mazur distanc}
shows that one can extend property (T) to  class of representations.

We also note that M.~Cowling proposed to define a numerical invariant of a hyperbolic group by setting
$\inf \{ \sup_{g\in G}\Vert \pi_g\Vert \colon H^1(G,\pi)\neq 0\}$.
Theorem \ref{theorem: vanishing hyperbolic and Banach Mazur distanc} gives
a uniform lower bound of $\sqrt{2}$ on such an invariant, 
with probability 1, for hyperbolic groups in the Gromov model.

Theorem \ref{thm: formula for p for hyperbolic} brings us to  another interesting connection.
Pansu \cite{pansu-conformal} defined a quasi-isometry invariant  of a hyperbolic group,
called the conformal dimension, to be the number
$$\operatorname{confdim}(\partial G)=\inf\setd{\dim_{\operatorname{Haus}}(\partial G,d)}{d \text{ quasi-conformally equivalent to  }d_v},$$
where $\dim_{\operatorname{Haus}}$ denotes the Hausdorff dimension, $\partial G$ denotes the Gromov boundary of the hyperbolic group $G$
and  $d_v$ denotes any visual metric on $\partial G$.
We refer to \cite{kapovich-benakli} for a brief overview of conformal dimension of boundaries of
hyperbolic groups.
Bourdon and Pajot \cite{bourdon-pajot} showed that a hyperbolic group acts by affine isometries without
fixed points on $L_p(G)$ for $p$ greater than the conformal dimension of $\partial G$.
Combining this with vanishing of cohomology as studied here we see, that
if $H^1(G,\pi)$ vanishes for all isometric representations on $L_p$ then 
$$p\le \operatorname{confdim}(\partial G).$$  
Gromov \cite[9.B (g)]{gromov} and Pansu \cite[IV.b]{pansu-conformal} posed the question of estimating the conformal dimension
of random hyperbolic groups. Using Theorem \ref{main theorem : intro} we obtain such estimates.
\begin{cor}
With the assumptions and notation of Theorem \ref{thm: formula for p for hyperbolic}, 
$$\operatorname{confdim}(\partial G)\ge   \min\left\{p_0,{\overline{p}_0}^*\right\}.$$
\end{cor}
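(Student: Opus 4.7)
The plan is to combine Theorem \ref{thm: formula for p for hyperbolic} with the Bourdon--Pajot result on $L_p$-cohomology of hyperbolic groups, as indicated in the paragraph preceding the corollary. The proof is short and essentially a contrapositive argument, so the work is mostly in assembling the two inputs correctly.

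First I would fix $G$ as in Theorem \ref{thm: formula for p for hyperbolic} and set $P=\min\{p_0,\overline{p}_0^{\,*}\}$. Suppose for contradiction that $\operatorname{confdim}(\partial G)<P$. Then I can choose a real number $p$ with
\[
\operatorname{confdim}(\partial G)<p<P.
\]
On one hand, by Theorem \ref{thm: formula for p for hyperbolic}, since $p<P$ we have $H^1(G,\pi)=0$ for every isometric representation $\pi$ of $G$ on $L_p$; in particular for the regular representation on $L_p(G)$, so every affine isometric action of $G$ on $L_p(G)$ whose linear part is the left regular representation must have a fixed point.

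On the other hand, the Bourdon--Pajot theorem \cite{bourdon-pajot} asserts that for any hyperbolic group $G$ and any $p>\operatorname{confdim}(\partial G)$, there exists an affine isometric action of $G$ on $L_p(G)$ (with the regular representation as linear part, via the gradient cocycle on the boundary) which has no fixed point, equivalently $H^1(G,\pi_{\mathrm{reg}})\neq 0$ for $\pi_{\mathrm{reg}}$ the regular representation on $L_p(G)$. This directly contradicts the previous paragraph, so the assumption $\operatorname{confdim}(\partial G)<P$ is impossible, and the inequality in the corollary follows.

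The main (and only real) obstacle is to make sure the Bourdon--Pajot nonvanishing result is applied in the exact form compatible with Theorem \ref{main theorem : intro}: one must verify that the action they construct is an affine isometric action (with isometric linear part on an $L_p$-space) rather than merely an unbounded cocycle in a weaker sense, and that $L_p(G)$ falls within the class of reflexive Banach spaces ($1<p<\infty$) to which Theorem \ref{thm: formula for p for hyperbolic} applies. Once these compatibility checks are made, the argument is purely formal: vanishing of $H^1$ for all such $\pi$ forces $p\le \operatorname{confdim}(\partial G)$, and taking the supremum over admissible $p$ in the open interval $(1,P)$ yields $\operatorname{confdim}(\partial G)\ge P$, which is the claimed estimate.
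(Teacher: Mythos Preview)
Your argument is correct and matches the paper's own reasoning: the corollary is stated as an immediate consequence of combining Theorem \ref{thm: formula for p for hyperbolic} with the Bourdon--Pajot nonvanishing result, exactly via the contrapositive you describe. The only minor redundancy is the final sentence about ``taking the supremum over admissible $p$''; once the contradiction with $\operatorname{confdim}(\partial G)<P$ is established, the inequality $\operatorname{confdim}(\partial G)\ge P$ follows directly, so no further limiting step is needed.
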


Finally, as mentioned in the introduction, the above facts show that the
method of Poincar\'{e} inequalities cannot in general give vanishing of cohomology as 
studied in this paper, for all $2<p<\infty$. 
In addition, we have the following quantitative statement about Poincar\'{e} constants.
\begin{cor}
For any hyperbolic group $G$ and any generating set $S$ not containing the identity element, 
the Poincar\'{e} constants on the link graph associated to $S$ satisfy $$\kappa_p(S,L_p)\ge 2^{\frac{1}{p}}\ \ \ \ \text{ or } \ \ \ \ \kappa_{p^*}(S,L_{p^*})\ge 2^{\frac{1}{p^*}},$$
for $p>\operatorname{confdim}(\partial G)$.
\end{cor}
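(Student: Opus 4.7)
The plan is to prove this corollary by contrapositive, combining Theorem \ref{main theorem : intro} with the result of Bourdon and Pajot cited just before the corollary. The argument is essentially a one-line contradiction, so I will focus on laying it out cleanly rather than on calculations.

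First I would fix a hyperbolic group $G$, a generating set $S$ as in the statement, and a number $p > \operatorname{confdim}(\partial G)$. By Bourdon--Pajot, for every such $p$ the group $G$ admits an affine isometric action on $L_p(G)$ without a fixed point. Choosing $\pi$ to be the linear part of this action, we have an isometric representation $\pi$ of $G$ on the reflexive Banach space $X = L_p$ for which $H^1(G,\pi) \neq 0$.

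Next I would apply the contrapositive of Theorem \ref{main theorem : intro} to this $X = L_p$. The link graph $\mathcal{L}(S)$ of any finite symmetric generating set $S$ is automatically well-defined; if it fails to be connected the $p$-Poincar\'{e} constants are infinite and the conclusion is trivial, so we may assume it is connected. Since $X^* = L_{p^*}$, the hypothesis of the main theorem reads
\[
\max\!\left\{2^{-\frac{1}{p}}\kappa_p(S,L_p),\ 2^{-\frac{1}{p^*}}\kappa_{p^*}(S,L_{p^*})\right\} < 1.
\]
If this held, then Theorem \ref{main theorem : intro} would force $H^1(G,\pi) = 0$ for every isometric representation on $L_p$, contradicting the previous paragraph. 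Hence at least one of the two quantities must be $\geq 1$, which is exactly the dichotomy claimed.

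There is no real obstacle here, since both ingredients are already in place: the existence of proper affine isometric actions on $L_p(G)$ for large $p$ comes from Bourdon--Pajot, and the sufficient Poincar\'{e}-type condition for cohomological vanishing is exactly Theorem \ref{main theorem : intro}. The only small point to be careful about is the self-duality $L_p^{*} \cong L_{p^*}$, which is used to match the $X$ and $X^*$ slots of the main theorem with the two Poincar\'{e} constants appearing in the corollary.
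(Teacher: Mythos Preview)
Your argument is correct and is exactly the paper's intended reasoning: the corollary is stated without a separate proof, as it follows immediately by combining the contrapositive of Theorem~\ref{main theorem : intro} with the Bourdon--Pajot result recalled just above. Your handling of the case of a disconnected link graph (where the Poincar\'e constants are infinite) is also the right way to dispose of that technical hypothesis.
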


\section{Other Applications}
\subsection{Actions on the circle}
Fixed point properties for the spaces $L_p$, $p>2$, can be applied to studying actions on
the circle, by applying  the vanishing of cohomology to the $L_p$-Liouville  cocycle.
In \cite{navas} the following theorem was proved.
\begin{thm}
Let $G$ be a discrete group, such that $H^1(G,\pi)=0$ for every isometric representation of $G$
on $L_p$ for some $p>2$. Then for every $\alpha>\dfrac{1}{p}$ every homomorphism 
$h:G\to \operatorname{Diff}^{1+\alpha}_+(S^1)$ has finite image.
\end{thm}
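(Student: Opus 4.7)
The plan is to build an isometric $L_p$-representation $\pi$ of $G$ together with a 1-cocycle $b\in Z^1(G,\pi)$ out of the hypothetical homomorphism $h$, designed so that the existence of a coboundary primitive for $b$ forces $h(G)$ to be finite; the assumption $H^1(G,\pi)=0$ will then close the argument by contradiction.

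For the representation I would use the Koopman construction on the off-diagonal square $(S^1\times S^1)\setminus \Delta$ equipped with the M\"obius-invariant infinite measure $d\mu=|x-y|^{-2}\,dx\,dy$. For $g\in\operatorname{Diff}^{1+\alpha}_+(S^1)$, set
$$J_g(x,y)=\frac{(g^{-1})'(x)\,(g^{-1})'(y)\,|x-y|^2}{|g^{-1}x-g^{-1}y|^2},\qquad \pi_g\xi(x,y)=J_g(x,y)^{1/p}\,\xi(g^{-1}x,g^{-1}y).$$
A direct change of variables shows that $J_g=d(g_\ast\mu)/d\mu$ and that $\pi_g$ is an $L_p(d\mu)$-isometry; composing with $h$ produces an isometric $L_p$-representation of $G$. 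The candidate cocycle is the Liouville-type 1-cocycle $b(g):=J_g^{1/p}-1$, which is a genuine cocycle thanks to the chain-rule identity $J_{gh}(x,y)=J_g(x,y)\,J_h(g^{-1}x,g^{-1}y)$.

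The critical analytic step is to verify that $b(g)\in L_p(d\mu)$, even though $b$ is formally the coboundary of the constant function $1\notin L_p(d\mu)$. Because $(g^{-1})'$ is $\alpha$-H\"older, a Taylor expansion along the diagonal yields $J_g(x,y)=1+O(|x-y|^\alpha)$, whence
$$|b(g)(x,y)|^p\,d\mu \;\sim\; |x-y|^{\alpha p-2}\,dx\,dy \qquad \text{near the diagonal};$$
integrating in the transverse direction, this is finite precisely when $\alpha p>1$, i.e.\ under our hypothesis $\alpha>1/p$. Applying $H^1(G,\pi)=0$ produces $\xi\in L_p(d\mu)$ with $b(g)=\pi_g\xi-\xi$ for every $g\in G$, and rearranging gives
$$J_g(x,y)^{1/p}\bigl(\xi(g^{-1}x,g^{-1}y)-1\bigr)=\xi(x,y)-1,$$
so that the \emph{formal} function $\xi-1$ is pointwise $\pi$-invariant under $h(G)$.

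The remaining step is to extract finiteness of $h(G)$ from this invariance: the $L_p$-decay of $\xi$ against the singular density $|x-y|^{-2}$ forces $\xi-1\to -1$ in a controlled averaged sense as $y\to x$, and the invariance identity propagates this behaviour across $h(G)$-orbits. An analysis as in \cite{navas} translates this into the statement that $h(G)$ must preserve a measurable structure on $(S^1)^2\setminus\Delta$ which is incompatible with an infinite group of $C^{1+\alpha}$-diffeomorphisms, so $h(G)$ is finite. This last translation---from an $L_p$-solution of the coboundary equation back to a concrete rigidity conclusion on $h(G)$---is the main obstacle: the integrability estimate and the cocycle identity are essentially routine, but the substance of the proof lies in the diagonal asymptotics, where the threshold $\alpha>1/p$ is sharp and below which the Liouville cocycle is not $L_p$-valued at all.
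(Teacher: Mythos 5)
First, a point of reference: the paper does not prove this statement at all --- it is quoted as a theorem of Navas (\cite{navas}, building on \cite{navas-fr}), and the only indication of method is the phrase ``applying the vanishing of cohomology to the $L_p$-Liouville cocycle.'' So the comparison has to be with the argument in those references, and your outline does follow it: the Koopman $L_p$-representation on $(S^1\times S^1)\setminus\Delta$ with the geodesic-current measure $d\mu=|x-y|^{-2}dx\,dy$, the cocycle $b(g)=J_g^{1/p}-1$ (formally $\delta$ of the constant $1$), the chain-rule verification of the cocycle identity, and the diagonal estimate $J_g=1+O(|x-y|^{\alpha})$ which makes $|b(g)|^p\,d\mu\sim|x-y|^{\alpha p-2}dx\,dy$ integrable exactly when $\alpha>1/p$. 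All of that is correct and is indeed where the hypothesis $\alpha>1/p$ enters.

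The genuine gap is the final step, which you flag as ``the main obstacle'' but then dispose of with a claim that is false as stated. What the coboundary equation gives you is that the Radon measure $|1-\xi|^p\,d\mu$ on $(S^1)^2\setminus\Delta$ is $h(G)$-invariant, i.e.\ $h(G)$ preserves a geodesic current with infinite mass near the diagonal. This is \emph{not} ``incompatible with an infinite group of $C^{1+\alpha}$-diffeomorphisms'': every Fuchsian group, indeed all of $\operatorname{PSL}(2,\mathbb{R})$, preserves the Liouville current itself. The actual content of \cite{navas-fr,navas} at this point is a dichotomy for groups preserving such a current --- either there is an invariant probability measure on $S^1$, or the action is topologically conjugate into $\operatorname{PSL}(2,\mathbb{R})$ --- followed by an appeal to property (T) for $G$ (which does follow from your hypothesis, since the fixed-point property for isometric actions on $L_p$, $p\ge 2$, implies (T) by \cite{bader-et-al}) to kill both branches: a Kazhdan group preserving a probability measure on the circle acts through a finite group, and every Kazhdan subgroup of $\operatorname{PSL}(2,\mathbb{R})$ is finite. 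This dichotomy and the reduction to (T) are the substance of the theorem; without them your argument stops at the existence of an invariant current, which by itself rules out nothing.
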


Combining this result with, for instance, Theorem \ref{theorem : fixed points for A_2-groups} we 
obtain
\begin{cor}
Let $q$ be a power of a prime number and 
$G_q$ be be the corresponding  $\widetilde{A}_2$ group. Then every homomorphism 
$h:G\to \operatorname{Diff}^{1+\alpha}_+(S^1)$ has finite image for 
$$\alpha >  
\dfrac{\dfrac{1}{2}\ln(2(q^2+q+1)(q+1))-\ln(2)-
\ln\left(\sqrt{1-\dfrac{\sqrt{q}}{q+1}}\right)}{\ln(q^2+q+1)+\ln(q+1)}.$$
\end{cor}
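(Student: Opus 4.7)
The plan is to combine two ingredients already available in the excerpt: Theorem \ref{theorem : fixed points for A_2-groups}, which provides cohomological vanishing for isometric representations of $G_q$ on $L_p$-spaces for an explicit range of $p$, and Navas' theorem (the unnumbered theorem quoted just above the corollary), which converts such vanishing into finiteness of the image of any homomorphism into $\operatorname{Diff}^{1+\alpha}_+(S^1)$ provided $\alpha>1/p$. Thus the proof is a one-line matching of numerical bounds, and no new analytic input is needed.

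More concretely, first I would denote by $p_q$ the threshold appearing in Theorem \ref{theorem : fixed points for A_2-groups}, namely
$$p_q=\dfrac{\ln(q^2+q+1)+\ln(q+1)}{\dfrac{1}{2}\ln(2(q^2+q+1)(q+1))-\ln(2)-\ln\left(\sqrt{1-\dfrac{\sqrt{q}}{q+1}}\right)}.$$
By Theorem \ref{theorem : fixed points for A_2-groups}, $H^1(G_q,\pi)=0$ for every isometric representation $\pi$ of $G_q$ on any $L_p$-space with $2\le p<p_q$. Observe that the hypothesis on $\alpha$ in the statement of the corollary is exactly the inequality $\alpha>1/p_q$, since the right-hand side of the stated bound is the reciprocal of the expression defining $p_q$.

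Next, suppose $\alpha$ satisfies $\alpha>1/p_q$. Since the map $p\mapsto 1/p$ is continuous and strictly decreasing, one can pick $p\in (2,p_q)$ with $\alpha>1/p$ as well; in particular $p>2$ and $H^1(G_q,\pi)=0$ for every isometric representation $\pi$ of $G_q$ on $L_p$ by the choice of $p_q$. The hypotheses of Navas' theorem quoted above the corollary are therefore met for this value of $p$. Applying that theorem with $G=G_q$ yields that every homomorphism $h\colon G_q\to \operatorname{Diff}^{1+\alpha}_+(S^1)$ has finite image, which is the desired conclusion.

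There is essentially no obstacle in this argument; the only thing to be careful about is that the bound in the corollary is a strict inequality and matches $1/p_q$ exactly, so one must invoke Theorem \ref{theorem : fixed points for A_2-groups} with a value of $p$ strictly below, but arbitrarily close to, $p_q$, which is allowed because the theorem covers the entire open interval $[2,p_q)$. Note also that the case $\alpha>1/p_q$ automatically implies $\alpha>1/2$, so Navas' differentiability hypothesis $1+\alpha$ is nontrivial and consistent.
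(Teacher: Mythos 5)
Your proof is correct and is exactly the paper's (unwritten) argument: the corollary is obtained by combining Theorem \ref{theorem : fixed points for A_2-groups} with Navas' theorem, choosing $p\in(2,p_q)$ close enough to $p_q$ that $\alpha>1/p$. One inconsequential slip: since $p_q>2$ we have $1/p_q<1/2$, so $\alpha>1/p_q$ does \emph{not} imply $\alpha>1/2$ — but Navas' theorem imposes no such requirement, so nothing is affected.
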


\subsection{The finite--dimensional case and the $p$-Laplacian}\label{section : p-Laplacian}

Let $1<p<\infty$. The $p$-Laplacian $\Delta_p$ is an operator $\Delta_p:\ell_p(V)\to\ell_p(V)$, 
defined by the formula
$$\Delta_p f(x)=\sum_{x\sim y} (f(x)-f(y))^{[p]}\omega(x,y),$$
for $f:V\to \RR$,
where $a^{[p]}= \vert a\vert^{p-1} \sign(a)$.
The $p$-Laplacian reduces to the standard discrete Laplacian for $p=2$, and
is non-linear when $p\neq 2$.
The $p$-Laplacian is of great importance in the study of partial differential equations.
Its discrete version was studied e.g. in \cite{amghibech,takeuchi}

A real number $\lambda$ is an eigenvalue of the $p$-Laplacian $\Delta_p$
if there exists a function $f:V\to \RR$ satisfying
$$\Delta_pf=\lambda f^{[p]}.$$
The eigenvalues of the $p$-Laplacian are difficult to 
compute in the case 
$p\neq2$, due to non-linearity of $\Delta_p$, see \cite{grigorchuk-nowak} for 
explicit estimates.
Define
\begin{equation}\label{equation : lambda_1}
\lambda_1^{(p)}(\Gamma)=\inf\left\{\dfrac{\sum_{x\in V}\sum_{y\sim x}\vert f(x)-f(y)\vert^p \omega(x,y) }
{\inf_{\alpha\in \RR} \sum_{x\in V}\vert f(x)-\alpha\vert^p\deg_{\omega}(x)}\right\},
\end{equation}
with the infimum taken over all $f:V\to \RR$ such that $f$ is not constant.
$\lambda_1^{(p)}$ is  the smallest positive eigenvalue of the discrete
$p$-Laplacian $\Delta_p$ or the \emph{$p$-spectral gap}.

We now apply an estimate similar to the one in Corollary 
\ref{corollary : Poincare constant on Cayley graph}, to finite quotients of groups.
Let $G$ be a finitely generated group and 
consider a homomorphism $h:G\to H$, where $H$ is a finite group. Let $p>1$ and let
$\ell_p^{\,0}(H)$ denote the subspace of $\ell_p(H)$ consisting of those functions,
which sum to 0. 

We can identify the dual $\ell_p^{\,0}(H)^*$ with
the space $\ell_{p^*}^{\,0}(H)$, with the norm
$$\Vert f \Vert =\inf_{\alpha\in \RR}\Vert f-\alpha\Vert_{p^*}.$$
We will use our results to estimate the $p^*$-spectral gap for this norm on the Cayley graph of $H$.

Let $X=\ell_{p}^{\,0}(H)^*$ be equipped with the adjoint of the left regular representation 
$\lambda$ on $\ell_p(H)$, restricted to $X^*=\ell_p^{\,0}(H)$. We have
$$\kappa_p(S, X^*)\le \kappa_p(S,\ell_p(H))=\kappa_p(S,\RR).$$
Computing the Poincar\'{e} constant of the link graph for the norm of $X$ is not
straightforward. 
However, following the strategy outlined in Remark 
\ref{remark : getting rid of one spectral condition},
we will show that we can bypass this condition. In order to do this we need to show that
$i^*\circ\overline{i}$ is onto. In fact, a stronger statement is true.

\begin{lem}\label{lemma: two kernels of d isomorphic}
Under the above assumptions, the map $i^*\circ \overline{i}:\ker d\to (\ker \overline{d})$, is an isomorphism. 
\end{lem}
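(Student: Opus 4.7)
The plan is to exploit the fact that in this specific setting both $X$ and $X^*$ share a common underlying finite-dimensional vector space $V := \{\xi\colon H\to \mathbb{R} : \sum_{x\in H}\xi(x)=0\}$, and that the representations $\pi$ on $X$ and $\overline{\pi}$ on $X^*$ coincide as linear endomorphisms of $V$. Concretely, under the natural identification $X = \ell_p^{\,0}(H)^* \cong \ell_{p^*}^{\,0}(H)$ via the standard bilinear pairing $\langle \xi,\eta\rangle = \sum_{x\in H} \xi(x)\eta(x)$, a direct computation shows that the adjoint of the left regular representation is again given by $g\cdot\xi(x) = \xi(h(g)^{-1}x)$; thus $\pi$ and $\overline{\pi}$ define the same operator on $V$ for every $g\in G$.

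With this identification in hand, the twisted antisymmetry condition defining $C^{(1,p)}_-(G,X)$ coincides with the one defining $C^{(1,p^*)}_-(G,X^*)$, and the operators $d$ and $\overline{d}$ are given by identical formulas on the common underlying space $V^S$. Consequently $\ker d$ and $\ker\overline{d}$ coincide as vector subspaces of $V^S$; write $K$ for this common subspace, which is finite-dimensional since $|H|<\infty$.

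Under the same identifications the pairing $\langle f,\phi\rangle_1$ reduces to the symmetric bilinear form
$$B(f,\phi) = \sum_{s\in S}\sum_{x\in H} f(s)(x)\,\phi(s)(x)\,\deg_\omega(s)$$
on $K\times K$, and $i^*\circ\overline{i}$ becomes the associated linear map $K\to K^*$, $\phi\mapsto B(\cdot,\phi)$. Substituting $\phi=f$ yields $B(f,f) = \sum_{s\in S}\|f(s)\|_{\ell_2(H)}^2\deg_\omega(s)\ge 0$, with equality only when $f=0$. Hence $B$ is positive definite and therefore non-degenerate on the finite-dimensional space $K$, forcing $i^*\circ\overline{i}$ to be an isomorphism.

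The only real care lies in pinning down the identification $\ell_p^{\,0}(H)^* \cong \ell_{p^*}^{\,0}(H)$ (a priori this dual is $\ell_{p^*}(H)/\mathbb{R}\mathbf{1}$, and one chooses the natural complement inside $\ell_{p^*}(H)$) and verifying that $\pi$ and $\overline{\pi}$ genuinely agree on $V$. Once that bookkeeping is settled, the lemma reduces to positive-definiteness of a Euclidean bilinear form; no real analytic obstacle arises, and the true content is simply the coincidence of cocycle spaces across the $p$/$p^*$ duality for the regular representation.
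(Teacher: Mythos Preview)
Your argument is correct and follows essentially the same route as the paper: identify $X$ and $X^*$ with the common finite-dimensional vector space $V=\{\xi:H\to\mathbb{R}:\sum\xi=0\}$, check that the regular representation is self-adjoint so that $\pi=\overline{\pi}$ on $V$, and conclude that $\ker d$ and $\ker\overline{d}$ coincide as a single subspace $K\subseteq V^S$. The paper stops there, invoking only ``finite-dimensional and complemented''; your additional step---observing that $i^*\circ\overline{i}$ is the map $K\to K^*$ induced by the positive-definite form $B(f,f)=\sum_{s\in S}\Vert f(s)\Vert_{\ell_2(H)}^2\deg_\omega(s)$---is a genuine improvement, since it explains concretely why the map is injective (hence bijective by equal finite dimensions) rather than leaving that to the reader.
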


\begin{proof}
We can view $X$ and $X^*$ as having the same underlying vector space (real-valued
functions $f:H\to X$ with mean value 0), equipped with two different norms.
Similarly, $C^{(1,p^*)}_-(G,\pi)$ and $C^{(1,p)}_-(G,\overline{\pi})$ also have the same underlying 
vector space, equipped with two different norms. The adjoint
$\overline{\lambda}$ of the left regular representation, coincides with $\lambda$. 
For that reason $\ker d_{\pi}$ and $\ker d_{\overline{\pi}}$ describe the same vector subspace.
The claim follows from the fact that all the spaces involved are finite-dimensional and
complemented.\end{proof}

Now, since the representation of $G$ on $X$ does not have invariant vectors
and $\delta_{\pi}$ is onto $\ker d_{\pi}$, we can conclude, by the Open Mapping Theorem, that 
$\delta_{\pi}$ in fact induces an isomorphism between $C^{(0,p^*)}(G,\pi)$ and $\ker d_{\pi}$.
It follows from Theorem \ref{theorem : inequality for kappas}, that 
$$ 2 \left(1-2^{-\frac{1}{p}}\kappa_p(S,\RR)\right) \Vert f\Vert_{(0,p^*)}\le \Vert \delta_{\overline{\pi}} f \Vert_{1,p^*}.$$
Since $f\in \ell_p^{\,0}(H)$, this gives
$$
 \left(2 \left(1-2^{-\frac{1}{p}}\kappa_p(S,\RR)\right)\right)^{p^*} \Vert f\Vert_{X}^{p^*}\le 
 \sum_{s\in S}\Vert f-\lambda_s f\Vert_X^{p^*}\dfrac{\deg_{\omega}(s)}{\omega(E)}.
 $$
Since $\Vert v\Vert_X\le \Vert v\Vert_{\ell_p(H)}$, this yields
\begin{equation*}
\begin{split} 
\left(2 \left(1-2^{-\frac{1}{p}}\kappa_p(S,\RR)\right)\right)^{p^*}
& \inf_{c\in \RR}\sum_{h\in H} \vert f(h)-c\vert^{p^*}\deg_{\omega}(h)\\
 &\le \sum_{h\in H}\sum_{g\sim h}\vert  f(h)-f(g)\vert^{p^*} \dfrac{\deg_{\omega}(g^{-1}h)}{\omega(E)}.
 \end{split}
 \end{equation*}
(Note that $\deg_{\omega}(g^{-1}h)$ refer to $\mathcal{L}(S)$, not the Cayley graph of $H$.)
\begin{cor}
Let 
$G$ be a group generated by a finite, symmetric set $S$ not containing the
identity element. If the link graph $\mathcal{L}(S)$ is connected and for some $1<p<\infty$
the Poincar\'{e} constant satisfies
$$2^{-\frac{1}{p}} \kappa_p(S,\RR)<1,$$
then 
$$\lambda_1^{(p)}\ge  2 \left(1-2^{-\frac{1}{p}}\kappa_p(S,\RR)\right)$$
on the Cayley graph of any finite quotient of $G$, for any weight $\omega(g,h)\ge  \dfrac{\deg_{\omega}(g^{-1}h)}{\omega(E)}$.

\end{cor}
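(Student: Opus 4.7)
The plan is to carry out the strategy outlined in Section \ref{section : p-Laplacian} just before the corollary, turning the Poincar\'{e} estimate on $\mathcal{L}(S)$ into a spectral gap estimate on the Cayley graph of any finite quotient. First, I would let $H$ be a finite quotient of $G$ and take the Banach space $X = \ell_p^{\,0}(H)^*$ equipped with the adjoint of the left regular representation $\lambda$ restricted to $\ell_p^{\,0}(H)$; this is an isometric representation of $G$ (via the quotient map) and it has no non-zero invariant vectors because $\ell_p^{\,0}(H)$ contains no non-zero constants. The assumption $2^{-1/p}\kappa_p(S,\mathbb{R})<1$ together with the direct-sum inequality from Subsection \ref{subsection : behavior under isomorphisms} then gives $\kappa_p(S,X^*)\le \kappa_p(S,\ell_p(H))=\kappa_p(S,\mathbb{R})$, so the single spectral inequality $2^{-1/p}\kappa_p(S,X^*)<1$ is satisfied.

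Next, I would invoke Lemma \ref{lemma: two kernels of d isomorphic} to bypass the missing $\kappa_{p^*}$-condition: because $X$ is finite-dimensional and the regular representation is self-adjoint (in the sense used there), the map $i^*\circ\overline{i}$ is automatically an isomorphism. Following Remark \ref{remark : getting rid of one spectral condition} and the proof of Theorem \ref{main theorem : intro}, this is precisely the input needed to conclude that $\delta:C^{(0,p^*)}(G,X)\to \ker d$ is surjective. Since $\pi$ has no invariant vectors the kernel of $\delta$ is trivial, so by the Open Mapping Theorem $\delta$ is an isomorphism onto $\ker d$, and combining this with Theorem \ref{theorem : inequality for kappas} applied in the dual direction yields the quantitative bound
$$2\bigl(1-2^{-\frac{1}{p}}\kappa_p(S,\mathbb{R})\bigr)\,\Vert f\Vert_{(0,p^*)}\le \Vert \delta f\Vert_{(1,p^*)}$$
for every $f\in X$, which is exactly the inequality displayed just before the corollary.

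Finally, I would translate this functional inequality into the spectral gap statement on the Cayley graph of $H$. The key observations are: $\delta f(s) = f - \lambda_s f$, the norm on $X$ is the quotient norm $\Vert f\Vert_X = \inf_{c\in \mathbb{R}}\Vert f - c\Vert_{p^*}$, and $\Vert v\Vert_X\le \Vert v\Vert_{\ell_{p^*}(H)}$. Unpacking the definitions of $\Vert \cdot\Vert_{(0,p^*)}$ and $\Vert \cdot\Vert_{(1,p^*)}$ and using $G$-equivariance to rewrite the outer sum over $s\in S$ as a sum over edges $g\sim h$ in the Cayley graph (with the indicated weight $\omega(g,h)\ge \deg_\omega(g^{-1}h)/\omega(E)$) produces a discrete $p^*$-Poincar\'{e} inequality on $H$. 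The variational characterisation (\ref{equation : lambda_1}) of the first positive eigenvalue of the discrete $p$-Laplacian then yields the claimed lower bound for $\lambda_1^{(p)}$ on the Cayley graph of $H$.

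The main obstacle I expect is the bookkeeping in the last step: carefully tracking the degree weights and the passage from the link graph $\mathcal{L}(S)$ to the Cayley graph of $H$, matching the Jensen-type symmetrisation so that the sum $\sum_{s\in S}\Vert f-\lambda_s f\Vert_{\ell_{p^*}}^{p^*}\deg_\omega(s)$ reorganises into the Dirichlet form $\sum_{h}\sum_{g\sim h}\vert f(h)-f(g)\vert^{p^*}\omega(g,h)$. Once this identification is performed cleanly, the corollary follows.
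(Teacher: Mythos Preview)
Your proposal is correct and follows the paper's own argument essentially step for step: the same choice of $X=\ell_p^{\,0}(H)^*$, the same use of Lemma~\ref{lemma: two kernels of d isomorphic} to bypass the second Poincar\'{e} condition, the same appeal to Theorem~\ref{theorem : inequality for kappas} for the quantitative bound, and the same translation of $\Vert\delta f\Vert_{(1,p^*)}$ into a Dirichlet-type sum on the Cayley graph of $H$. The only minor slips are a misattributed cross-reference (the inequality $\kappa_p(S,\ell_p(H))=\kappa_p(S,\mathbb{R})$ comes from the $L_p$-spaces discussion, not Subsection~\ref{subsection : behavior under isomorphisms}) and the $p$ versus $p^*$ switch in your last sentence, which mirrors the paper's own notation.
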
 

\begin{rem}\normalfont
A similar claim as in lemma \ref{lemma: two kernels of d isomorphic} holds for any orthogonal
representation which is also isometric on $\ell_p(H)$.
\end{rem}

\bigskip
\footnotesize
\noindent\textit{Acknowledgments.}

I am grateful to Florent Baudier, Pierre-Emmanuel Caprace, Valerio Capraro,
Tadeusz Januszkiewicz and Juhani Koivisto for helpful comments and to Masato Mimura and Andrzej \.{Z}uk
for enjoyable conversations on related topics. I am also greatly indebted to Antoine Gournay  and Alain Valette, who suggested
several improvements and simplifications, in particular the use of the idempotent $Q_{\pi}$.

The author was partially supported by NSF grant DMS-0900874 and by the Foundation for Polish Science.

\end{document}